\definecolor{lightgray}{rgb}{0.8, 0.8, 0.8}
\definecolor{darkgray}{rgb}{0.7, 0.7, 0.7}
\definecolor{darkblue}{rgb}{0, 0, .4}
\newtheorem{theorem}{Theorem}[section]
\newtheorem{proposition}[theorem]{Proposition}
\newtheorem{conjecture}[theorem]{Conjecture}
\newtheorem{question}[theorem]{Question}
\newtheoremstyle{example}{\topsep}{\topsep}%
     {}
     {}
     {\bfseries}
     {.}
     {.5em}
     {\thmname{#1}\thmnumber{ #2}}
\theoremstyle{example}
\newtheoremstyle{negexample}{\topsep}{\topsep}%
     {}
     {}
     {\bfseries}
     {.}
     {.5em}
     {\thmname{#1}\thmnumber{ #2}}
\theoremstyle{negexample}
\newcounter{todocounter}
\long\def\symbolfootnote[#1]#2{\begingroup%
\def\thefootnote{\fnsymbol{footnote}}\footnote[#1]{#2}\endgroup}
\newcommand{\Rm}[1]{\expandafter\@slowromancap\romannumeral #1@}
\newfont{\footsc}{cmcsc10 at 8truept}
\newfont{\footbf}{cmbx10 at 8truept}
\newfont{\footrm}{cmr10 at 10truept}
\renewenvironment{abstract}%
                {
                  \begin{list}{}%
                     {\setlength{\rightmargin}{1in}%
                      \setlength{\leftmargin}{1in}}%
                   \item[]\ignorespaces\begin{small}}%
                 {\end{small}\unskip\end{list}}
\keywords{data structure, permutation pattern, pop stack, sorting, stack}
\title{\sc{A Stack and a Pop Stack in Series}}
\author{Rebecca Smith\footnote{The first author was partially supported by the NSA Young Investigator Grant H98230-08-1-0100.}
\\[-0.25ex]
\small Department of Mathematics\\[-0.5ex]
\small SUNY Brockport\\[-0.5ex]
\small Brockport, New York\\[15pt]
Vincent Vatter\footnote{The second author was partially supported by the NSA Young Investigator Grant H98230-12-1-0207.}\\[-0.25ex]
\small Department of Mathematics\\[-0.5ex]
\small University of Florida\\[-0.5ex]
\small Gainesville, Florida\\[-1.5ex]
}
\date{}
\begin{document}
\maketitle

\pagestyle{main}

\newcommand{\s}{\mathbf{s}}
\newcommand{\m}{\mathbf{m}}
\renewcommand{\t}{\mathbf{t}}
\renewcommand{\b}{\mathbf{b}}
\newcommand{\f}{\mathbf{f}}
\newcommand{\rev}{\operatorname{rev}}
\newcommand{\dual}{\operatorname{dual}}
\newcommand{\C}{\mathcal{C}}
\newcommand{\D}{\mathcal{D}}
\newcommand{\Av}{\operatorname{Av}}

\newcommand{\inp}{\textsf{i}}
\newcommand{\tra}{\textsf{t}}
\newcommand{\out}{\textsf{o}}

\newcommand{\R}{\stackrel{R}{\sim}}
\renewcommand{\L}{\stackrel{L}{\sim}}
\newcommand{\notR}{\stackrel{R}{\not\sim}}
\newcommand{\notL}{\stackrel{L}{\not\sim}}

\newcommand{\OEISlink}[1]{\href{http://oeis.org/#1}{#1}}
\newcommand{\OEISref}{\href{http://oeis.org/}{OEIS}~\cite{sloane:the-on-line-enc:}}
\newcommand{\OEIS}[1]{(Sequence \OEISlink{#1} in the \OEISref.)}

%
%
%
%
%
%
%
%

\def\sdwys #1{\xHyphenate#1$\wholeString}
\def\xHyphenate#1#2\wholeString {\if#1$%
\else\say{\ensuremath{#1}}\hspace{2pt}%
\takeTheRest#2\ofTheString
\fi}
\def\takeTheRest#1\ofTheString\fi
{\fi \xHyphenate#1\wholeString}
\def\say#1{\begin{turn}{-90}\ensuremath{#1}\end{turn}}

\newenvironment{twostacks}
{
	\begin{scriptsize}
	\psset{xunit=0.0355in, yunit=0.0355in, linewidth=0.02in}
	\begin{pspicture}(0,-2)(35,20)
	\psline{c-c}(0,15)(10,15)(10,0)(15,0)(15,15)(20,15)(20,0)(25,0)(25,15)(35,15)
	\rput[l](-0.5,12.5){\mbox{output}}
	\rput[r](35,12.5){\mbox{input}}
}
{
	\end{pspicture}
	\end{scriptsize}
}

\newcommand{\fillstack}[4]{%
	\rput[l](-0.5,17.5){\ensuremath{#1}}
	\rput[c](12.6, 7.5){\begin{sideways}{\sdwys{#2}}\end{sideways}}
	\rput[c](22.6, 7.5){\begin{sideways}{\sdwys{#3}}\end{sideways}}
	\rput[r](35,17.5){\ensuremath{#4}}
}

\newcommand{\stackinput}{%
	\psline[linecolor=darkgray]{c->}(27, 17.5)(22.5, 17.5)(22.5, 14)
}
\newcommand{\stackshortinput}{%
	\psline[linecolor=darkgray]{c->}(26, 17.5)(22.5, 17.5)(22.5, 14)
}
\newcommand{\stacktransfer}{%
	\psline[linecolor=darkgray]{c->}(22.5, 14)(22.5, 17.5)(12.5, 17.5)(12.5, 14)
}
\newcommand{\stackoutput}{%
	\psline[linecolor=darkgray]{c->}(12.5, 14)(12.5, 17.5)(8, 17.5)
}
\newcommand{\stackshortoutput}{%
	\psline[linecolor=darkgray]{c->}(12.5, 14)(12.5, 17.5)(9, 17.5)
}
\newcommand{\stacksupershortoutput}{%
	\psline[linecolor=darkgray]{c->}(12.5, 14)(12.5, 17.5)(9.9, 17.5)
}

\begin{abstract}
We study sorting machines consisting of a stack and a pop stack in series, with or without a queue between them.  While there are, a priori, four such machines, only two are essentially different: a pop stack followed directly by a stack, and a pop stack followed by a queue and then by a stack.  In the former case, we obtain complete answers for the basis and enumeration of the sortable permutations.  In the latter case, we present several conjectures.
\end{abstract}

\section{Introduction}

A stack is a last-in first-out sorting device with push and pop operations.  In Volume 1 of \emph{The Art of Computer Programming}~\cite[Section 2.2.1]{knuth:the-art-of-comp:1}, Knuth showed that the permutation $\pi$ can be sorted (meaning that by applying push and pop operations to the sequence of entries $\pi(1),\dots,\pi(n)$ one can output the sequence $1,\dots,n$) if and only if $\pi$ avoids the permutation $231$, i.e., if and only if there do not exist three indices $1\le i_1<i_2<i_3\le n$ such that $\pi(i_1),\pi(i_2),\pi(i_3)$ are in the same relative order as $231$.  Shortly thereafter Tarjan~\cite{tarjan:sorting-using-n:}, Even and Itai~\cite{even:queues-stacks-a}, Pratt~\cite{pratt:computing-permu:}, and Knuth himself in Volume 3~\cite[Section 5.2.4]{knuth:the-art-of-comp:3} studied networks with multiple stacks in series or in parallel.  The questions typically studied for such networks include:
\begin{itemize}
\item Can the set of sortable permutations be characterized by a finite set of forbidden patterns (e.g., $\{231\}$ in the case of a single stack)?
\item How many permutations of each length can be sorted?
\end{itemize}
For $k\ge 2$ stacks in series or in parallel the answer to the first question is no, due to Murphy~\cite{murphy:restricted-perm:} and Tarjan~\cite{tarjan:sorting-using-n:}, respectively.  The exact enumeration question appears to be much less tractable, and here only relatively crude bounds are known; see Albert, Atkinson, and Linton~\cite{albert:permutations-ge:}.

Given how difficult the two stacks in series problem appears to be, numerous researchers have considered weaker machines.  Atkinson, Murphy, and Ru\v{s}kuc~\cite{atkinson:sorting-with-tw:} considered sorting with two {\it increasing\/} stacks in series, i.e., two stacks whose entries must be in increasing order when read from top to bottom\footnote{Even without this restriction, the final stack must be increasing if the sorting is to be successful.}.  They characterized the permutations this machine can sort with an infinite list of forbidden patterns, and also found the enumeration of these permutations.  (Interesting, these permutations are in bijection with the $1342$-avoiding permutations previously counted by B\'ona~\cite{bona:exact-enumerati:}.)  Another weakening, sorting with a stack of depth $2$ followed by a standard stack (of infinite depth), was studied by Elder~\cite{elder:permutations-ge:}.  He characterized the sortable permutations with a finite list of forbidden patterns, but did not enumerate these permutations.

The objects we study, pop stacks, were introduced by Avis and Newborn~\cite{avis:on-pop-stacks-i:}.  A {\it pop stack\/} is a handicapped form of a stack in which the only way to output an entry in the stack is to pop the entire stack (in last-in first-out order as usual).  Avis and Newborn considered placing pop stacks in series, which by their interpretation means that when the entire set of items currently in the $i$th pop stack is popped, they are pushed immediately onto the $(i+1)$st pop stack.  They proved that the set of permutations sortable by $k$ pop stacks in series can be characterized by a finite set of forbidden patterns and provided the enumeration of these permutations for every $k$.
 
There is another way to view sorting with pop stacks in series, where one is allowed to save the output of one pop stack and pass it into the next an entry at a time.  Serially linking pop stacks in this manner corresponds to placing a queue between the pop stacks, and is much more powerful than the Avis-Newborn interpretation.  Atkinson and Stitt~\cite{atkinson:restricted-perm:wreath}, who also gave a simpler derivation of Avis and Newborn's enumerative results using what is now known as the substitution decomposition, found the (rational) generating function for the permutations that can be sorted by a pop stack followed by a queue followed by another pop stack.

Here we consider sorting with a stack and a pop stack in series.  A priori, there are three different methods that these may be connected:

\begin{tabular}{ll}
\\
\textsf{PS}:&A pop stack followed by a stack, connected in Avis and Newborn's manner.\\
\textsf{PQS}:&A pop stack followed by a queue followed by a stack.\\
\textsf{SP}:&A stack followed by a pop stack, connected in Avis and Newborn's manner.\\
\textsf{SQP}:&A stack followed by a queue followed by a pop stack.\\
\\
\end{tabular}

Clearly the permutations sortable by \textsf{PS} are a subset of those sortable by \textsf{PQS}.  In the next section, we prove that \textsf{SP} and \textsf{SQP} are equivalent, and that \textsf{PQS} is a symmetry of these two machines.  Then, in Section~\ref{sec-pop-without} we characterize and enumerate the permutations sortable by \textsf{PS}.  In Section~\ref{sec-pop-with} we consider the class \textsf{PQS}, but are able to establish few concrete results.

It will be helpful to give names to the operations involved in these sorting machines.  Given a system of two stacks (of any type) in series, we refer to moving an entry from the input to the first stack as an \emph{input}, moving an entry from the first stack to the second stack as a \emph{transfer}, and moving an entry from the second stack to the output as an \emph{output}.  When there is a queue between the two stacks, we use the term transfer to describe both moving an entry from the first stack to the queue and moving it from the queue to the second stack.

We conclude the introduction with a bit of terminology which will be useful.  A \emph{permutation class} is a downset of permutations under the containment order.  Every permutation class can be specified by the set of minimal permutations which are \emph{not} in the class, which we call its \emph{basis}.  Finally, for a set $B$ of permutations, we denote by $\Av(B)$ the class of permutations which do not contain any element of $B$.  For example, Knuth's result says that the stack-sortable permutations are precisely $\Av(231)$, i.e., that they have the singleton basis $\{231\}$.  Given any naturally defined sorting machine, the set of sortable permutations forms a class\footnote{The exception that proves this rule is West's notion of $2$-stack-sortability~\cite{west:sorting-twice-t:}, which has an unusual defect due to restrictions on how the machine can use its two stacks.  For example, this machine can sort $35241$, but not its subpermutation $3241$.}.

Finally, it is frequently helpful to remember that the permutation containment order has eight symmetries which form the dihedral group of the square.  These are generated by two symmetries \emph{inverse} and \emph{reverse}, defined, respectively, by
\begin{eqnarray*}
\pi^{-1}(\pi(i))&=&i,\\
\pi^{\textrm{r}}(i)&=&\pi(n+1-i),\\
\end{eqnarray*}
for all $i$.  We will also make use of the \emph{complement} symmetry, defined by
\begin{eqnarray*}
\pi^{\textrm{c}}(i)&=&n+1-\pi(i),
\end{eqnarray*}
also for all $i$.

\section{The Equivalence of \textsf{PQS}, \textsf{SP}, \& \textsf{SQP}}

We begin with the easier equivalence, between the machines \textsf{SP} and \textsf{SQP}.

\begin{proposition}\label{prop-equiv-SP-SQP}
The machines \textsf{SP} and \textsf{SQP} are equivalent.
\end{proposition}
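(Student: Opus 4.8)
The plan is to prove the two inclusions $\textsf{SP}\subseteq\textsf{SQP}$ and $\textsf{SQP}\subseteq\textsf{SP}$ on the sets of sortable permutations. The first is immediate: any computation of $\textsf{SP}$ is copied on $\textsf{SQP}$ by never letting the queue hold more than one entry at a time, pairing each transfer out of the stack with an immediate transfer of that same entry from the queue onto the pop stack. All the content lies in the reverse inclusion, i.e.\ in showing that the queue confers no extra sorting power.

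For that direction I would begin by recording two structural observations about $\textsf{SQP}$. First, because the queue is first-in first-out and entries pass through it one at a time, the order in which entries leave the first stack is exactly the order in which they are eventually pushed onto the pop stack; the queue delays entries but never reorders them. Second, an output \out{} (which pops the entire pop stack to the output) touches neither the stack nor the queue. Together these suggest that the queue can simply be ``collapsed away.''

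Concretely, from a successful $\textsf{SQP}$ sorting of a permutation $\pi$ I would read off two pieces of data: the subsequence of stack operations (the \inp{}'s together with the transfers out of the first stack), which pops the entries in some order $q_1,\dots,q_n$; and the \emph{batch structure}, namely the partition of $q_1,\dots,q_n$ into the maximal consecutive blocks of entries pushed onto the pop stack between two successive \out{}'s. By the first observation the entries reach the pop stack in this same order $q_1,\dots,q_n$, so each batch is a block of consecutive indices. I would then build an $\textsf{SP}$ computation that performs the identical stack operations --- now routing each entry popped from the stack directly onto the pop stack as a transfer \tra{} --- and inserts an \out{} at each of the recorded batch boundaries.

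It remains to check that this is a legal $\textsf{SP}$ computation with the same output, and this is the only place I expect care to be needed. The stack operations are unchanged from the $\textsf{SQP}$ run, so they remain valid and still expose $q_1,\dots,q_n$ at the top of the stack in that order; and by the second observation the inserted \out{}'s do not disturb the stack, so they cannot obstruct any later transfer. Since the entries enter the pop stack in the same order and the \out{}'s sit at the same batch boundaries, the pop stack is emptied after exactly the same blocks, so the batches --- and hence the full output --- agree with those of the $\textsf{SQP}$ run, which was $1,\dots,n$. This gives $\textsf{SQP}\subseteq\textsf{SP}$ and completes the equivalence. The main subtlety to verify is the bookkeeping that an entry lingering in the queue during an \out{} of the $\textsf{SQP}$ run corresponds precisely to an entry still waiting in the stack (not yet transferred) in the $\textsf{SP}$ run, so that it is correctly excluded from that batch.
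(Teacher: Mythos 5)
Your proof is correct. It rests on the same two facts as the paper's proof --- the queue cannot reorder the entries passing through it, and the pop-stack output operation is independent of the stack --- but it packages them differently. The paper proves the more general statement that inserting a queue after a regular stack never changes the sorting power of \emph{any} machine \textsf{SM}: it characterizes \textsf{SM}-sortability of $\pi$ as ``the downstream machine \textsf{M} can sort $\pi(i_1),\dots,\pi(i_n)$ for some rearrangement $i_1\cdots i_n$ generable by a stack from the identity,'' and then observes that the identical characterization holds for \textsf{SQM} because the queue is order-transparent. Your argument instead performs explicit surgery on a successful \textsf{SQP} computation, specialized to the case where the downstream machine is a pop stack: you extract the stack operations and the batch structure, then re-insert the outputs at the batch boundaries of the transfer order $q_1,\dots,q_n$. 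What your version buys is that it makes explicit the rescheduling issue that the paper's ``clearly'' elides: an output in the \textsf{SQP} run may occur after some entry has already left the stack (lingering in the queue), so in the \textsf{SP} simulation the corresponding output must be placed \emph{earlier}, before that entry's transfer --- legal precisely because outputs never touch the stack, which is your second observation and the crux of the bookkeeping you flag at the end. What the paper's version buys is generality and brevity: the same one-paragraph argument disposes of any machine following the stack, not just a pop stack.
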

\begin{proof}
More generally, adding a queue after a regular stack never alters the sorting capabilities of a permutation machine.  The machine \textsf{SM} (a stack followed by \textsf{M}) can sort the entries of $\pi$ if and only if it can sort $\pi(i_1),\dots,\pi(i_n)$ where $i_1\cdots i_n$ is a permutation which can be \emph{generated} by a stack starting with the identity permutation as input.  Clearly, the same is true for the machine \textsf{SQM}, because the queue cannot alter the order in which entries pass through it.  (While it is not important to this proof, note that a stack can reorder the entries of $\pi$ as $\pi(i_1),\dots,\pi(i_n)$ if and only if $i_1\cdots i_n$ avoids $231^{-1}=312$.)
\end{proof}

Now we show that the permutations sortable by \textsf{PQS} and \textsf{SQP} are symmetries of each other.  To do so we consider a construction introduced by Murphy in his thesis~\cite{murphy:restricted-perm:}.  The \emph{two-stack dual} of the permutation $\pi$ of length $n$ is defined by
$$
\pi^{\rm d}=\left(\left(\pi^{\rm r}\right)^{-1}\right)^{\rm r},
$$
or more concretely by
$$
\pi^{\rm d}(i)=n+1-\pi^{-1}(n+1-i)
$$
for all $i$.

\begin{proposition}[Murphy~\cite{murphy:restricted-perm:}]\label{prop-two-stack-dual}
The permutation $\pi$ can be sorted by two stacks in series if and only if the permutation $\pi^{\rm d}$ can be sorted by two stacks in series.
\end{proposition}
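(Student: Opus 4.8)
The plan is to track, for a fixed sequence of machine operations, where each entry travels through the network, independently of the actual values being sorted. First I would record a run of the two-stack machine as a word $w$ over the alphabet $\{\inp,\tra,\out\}$ in which each of the $n$ entries is input once, transferred once, and output once. Because the values play no role in determining which operation moves which entry, such a word determines a \emph{routing permutation} $\rho_w$: if the entry entering in input position $j$ leaves in output position $k$, set $\rho_w(j)=k$. Sorting $\pi$ to $12\cdots n$ means exactly that the entry with value $\pi(j)$, which enters in position $j$, must exit in output position $\pi(j)$; hence $\pi$ is sortable precisely when $\pi=\rho_w$ for some legal word $w$. So it suffices to show that the set of legal routing permutations is stable under the map $\rho\mapsto\rho^{\rm d}$.

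Next I would reverse time. Given a legal word $w$, form $w'$ by reading $w$ from right to left and interchanging $\inp\leftrightarrow\out$, leaving each $\tra$ a transfer (now moving an entry from the second stack back to the first). The main point to check is that $w'$ is again a legal computation of two stacks in series: running a stack backward turns each pop into a push and each push into a pop while preserving the last-in first-out discipline, so the reversed device is once more two stacks in series, with the two stacks and the two ends mirrored. Each entry is still input, transferred, and output exactly once, so $w'$ is legal and has its own routing permutation $\rho_{w'}$. This legality check --- confirming that the mirrored operation sequence never pops an empty stack and genuinely realizes a two-stack-in-series routing --- is the one step that needs care; everything else is bookkeeping. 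Note also that $w\mapsto w'$ is an involution on words, which will hand me both directions of the equivalence at once.

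Finally I would compute $\rho_{w'}$. Under the reversal, the new input positions $1,\dots,n$ correspond to the old output positions $n,\dots,1$, and the new output positions to the old input positions read in reverse. Tracing an entry that went from input position $j$ to output position $\rho_w(j)$ shows that in $w'$ it runs from position $n+1-\rho_w(j)$ to position $n+1-j$; solving $\rho_{w'}(n+1-\rho_w(j))=n+1-j$ for the value at a general position $i$ gives $\rho_{w'}(i)=n+1-\rho_w^{-1}(n+1-i)$ for all $i$. This is exactly the formula defining $\rho_w^{\rm d}$. Hence $w\mapsto w'$ sends $\rho_w$ to $\rho_w^{\rm d}$, the set of legal routing permutations is stable under the two-stack dual, and therefore $\pi$ is sortable by two stacks in series if and only if $\pi^{\rm d}$ is.
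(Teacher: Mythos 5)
Your proof is correct and is essentially the paper's own argument: both rest on time-reversing the sequence of operations, which swaps input with output, flips the transfer direction (so the two stacks exchange roles), and converts a sorting of $\pi$ into a sorting of $\pi^{\rm d}$. Your routing-permutation formalism and the explicit check that the reversed word is legal are just a more careful bookkeeping of the same idea.
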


\begin{proof}
Consider any sequence of pushes, transfers, and pops which sorts the permutation $\pi$.  We think of these operations as generating $12\cdots n$ from the input $\pi$.  By performing these operations in reverse, we obtain a procedure to generate $n\cdots 21$ from $\pi^{\rm r}$ --- in this new procedure, the last entry popped from the second stack becomes the first entry pushed onto the first stack.  By applying $(\pi^{\rm r})^{-1}$ to the input symbols, we obtain a procedure to generate the identity from $(\pi^{\rm r})^{-1}\circ n\cdots 21$, i.e., sort the two-stack dual of $\pi$.
\end{proof}


So long as there is a queue in between, the proof of Proposition~\ref{prop-two-stack-dual} shows that the order of the stack and pop stack can be interchanged if we change the permutation to its two-stack dual, giving the result below.

\begin{proposition}
The permutation $\pi$ can be sorted by \textsf{PQS} if and only if $\pi^{\rm d}$ can be sorted by $\textsf{SQP}=\textsf{SP}$.
\end{proposition}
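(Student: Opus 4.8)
The plan is to follow the time-reversal argument in the proof of Proposition~\ref{prop-two-stack-dual} essentially verbatim, the only new ingredient being a check that each of the three devices involved behaves correctly when run backwards. First I would take any sequence of inputs, transfers, and outputs that sorts $\pi$ on \textsf{PQS}, and regard it as generating $12\cdots n$ from the input $\pi$ using a pop stack, a queue, and a (regular) stack, in that order. As in Murphy's argument, I would then run this entire sequence of operations backwards in time.

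The crux is to identify the machine one obtains by reversing time. In the forward direction, entries flow input $\to$ pop stack $\to$ queue $\to$ stack $\to$ output; running backwards, they flow output $\to$ stack $\to$ queue $\to$ pop stack $\to$ input, so the reversed machine reads the old output as its input and has the three devices in the opposite order. Thus I must verify that (i) a regular stack run backwards is again a regular stack, (ii) a queue run backwards is again a queue, and (iii) a pop stack run backwards is again a pop stack. Point (i) is the standard fact already underlying Proposition~\ref{prop-two-stack-dual}, and (ii) is immediate, since a queue merely reproduces its input in order. Point (iii) is the genuinely new step, and the one I expect to require the most care: a pop stack partitions its input into consecutive blocks, one per flush, and outputs each block reversed, so if the input blocks are $B_1,\dots,B_m$ then the output is $\rev(B_1)\cdots\rev(B_m)$; reading this backwards, the new input is $B_m\cdots B_1$ and the new output is $\rev(B_m)\cdots\rev(B_1)$, which is precisely the action of a pop stack flushing the blocks $B_m,\dots,B_1$. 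Hence the reversed machine is a stack, then a queue, then a pop stack, namely \textsf{SQP}.

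With the machine identified, the bookkeeping on the permutation is exactly as in Proposition~\ref{prop-two-stack-dual}: running the operations backwards produces a procedure that generates $n\cdots 21$ from $\pi^{\rm r}$ on \textsf{SQP} (the last entry output in the forward run becomes the first entry input in the reversed run), and relabeling the input symbols by $(\pi^{\rm r})^{-1}$ converts this into a procedure that sorts $(\pi^{\rm r})^{-1}\circ n\cdots 21$, i.e.\ $\pi^{\rm d}$, on \textsf{SQP}. By Proposition~\ref{prop-equiv-SP-SQP} this is the same as sorting $\pi^{\rm d}$ on \textsf{SP}. Finally, since time-reversal is an involution on operation sequences and $\pi\mapsto\pi^{\rm d}$ is an involution (as one checks directly from $\pi^{\rm d}(i)=n+1-\pi^{-1}(n+1-i)$), applying the same argument to $\pi^{\rm d}$ on \textsf{SQP} recovers $\pi$ on \textsf{PQS}, which supplies the converse and completes the equivalence.
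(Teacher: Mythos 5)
Your proposal is correct and takes essentially the same approach as the paper, which justifies this proposition by a one-sentence appeal to the time-reversal argument of Proposition~\ref{prop-two-stack-dual} (with the device order reversing under time reversal) together with the equivalence $\textsf{SQP}=\textsf{SP}$ of Proposition~\ref{prop-equiv-SP-SQP}. Your explicit checks that a stack, a queue, and a pop stack each act as a device of the same type when run backwards merely fill in details the paper leaves implicit in its phrase ``so long as there is a queue in between.''
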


\section{Without a Queue Between --- \textsf{PS}}\label{sec-pop-without}

\begin{figure}[t]
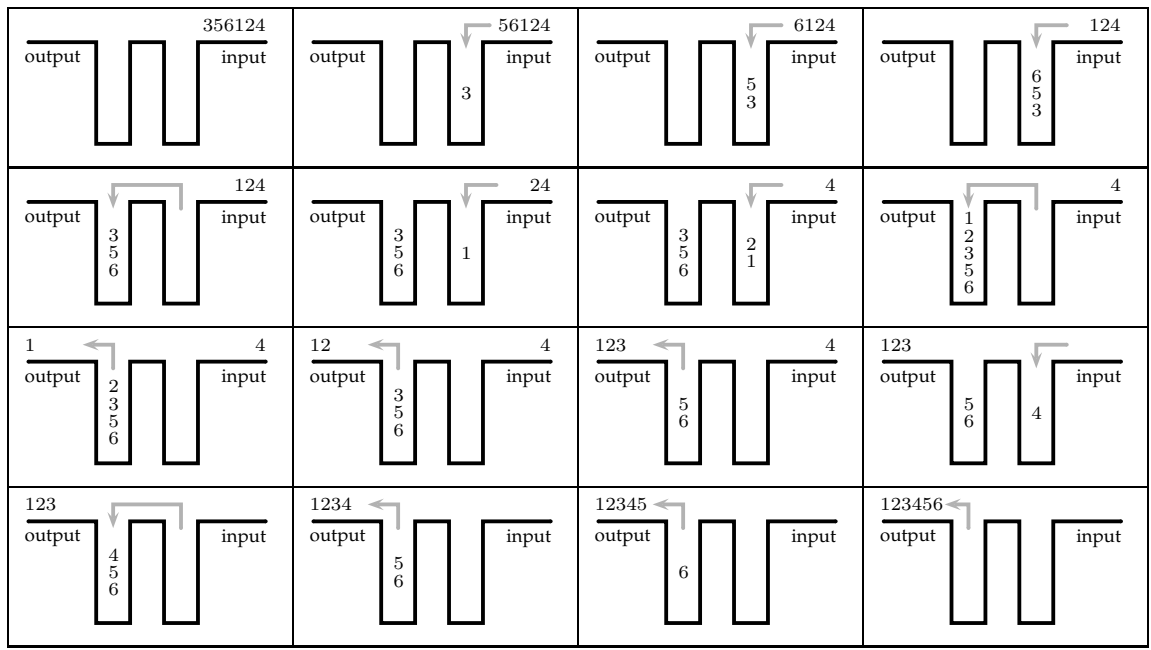

\begin{center}

\begin{tabular}{|c|c|c|c|}
\hline
\begin{twostacks}
\fillstack{}{}{}{356124}
\end{twostacks}
&
\begin{twostacks}
\fillstack{}{}{3}{56124}
\stackshortinput
\end{twostacks}
&
\begin{twostacks}
\fillstack{}{}{35}{6124}
\stackinput
\end{twostacks}
&
\begin{twostacks}
\fillstack{}{}{356}{124}
\stackinput
\end{twostacks}
\\\hline
\begin{twostacks}
\fillstack{}{653}{}{124}
\stacktransfer
\end{twostacks}
&
\begin{twostacks}
\fillstack{}{653}{1}{24}
\stackinput
\end{twostacks}
&
\begin{twostacks}
\fillstack{}{653}{12}{4}
\stackinput
\end{twostacks}
&
\begin{twostacks}
\fillstack{}{65321}{}{4}
\stacktransfer
\end{twostacks}
\\\hline
\begin{twostacks}
\fillstack{1}{6532}{}{4}
\stackoutput
\end{twostacks}
&
\begin{twostacks}
\fillstack{12}{653}{}{4}
\stackoutput
\end{twostacks}
&
\begin{twostacks}
\fillstack{123}{65}{}{4}
\stackoutput
\end{twostacks}
&
\begin{twostacks}
\fillstack{123}{65}{4}{}
\stackinput
\end{twostacks}
\\\hline
\begin{twostacks}
\fillstack{123}{654}{}{}
\stacktransfer
\end{twostacks}
&
\begin{twostacks}
\fillstack{1234}{65}{}{}
\stackoutput
\end{twostacks}
&
\begin{twostacks}
\fillstack{12345}{6}{}{}
\stackoutput
\end{twostacks}
&
\begin{twostacks}
\fillstack{123456}{}{}{}
\stackshortoutput
\end{twostacks}
\\\hline
\end{tabular}

\caption{Sorting the permutation $24513$ with the \textsf{PS} machine.}
\label{fig-PS}
\end{center}
\end{figure}

We begin by finding three permutations that the \textsf{PS} machine \emph{cannot} sort.  We will later show that these are the only minimal permutations which cannot be sorted by \textsf{PS}.

\begin{proposition}\label{prop-PS-no-3}
The permutations $2431$, $3142$, and $3241$ are not \textsf{PS}-sortable.
\end{proposition}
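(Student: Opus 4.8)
The plan is to reduce \textsf{PS}-sortability to a purely combinatorial condition on how the input is cut into blocks, and then to check that this condition fails for each of the three permutations. As we read $\pi = \pi(1)\cdots\pi(n)$ left to right, the pop stack accumulates a consecutive segment of the input, and each transfer empties the pop stack, reversing that segment onto the stack. So any run of the machine corresponds to a partition of $\pi$ into consecutive blocks $B_1,\dots,B_m$, and the sequence of entries arriving at the (ordinary) stack is the concatenation $\rev(B_1)\cdots\rev(B_m)$. I would isolate two necessary conditions for such a run to sort $\pi$. First, each block must be \emph{increasing} in input order: if a block contained entries $x$ before $y$ with $x>y$, then reversing the block pushes $y$ onto the stack before $x$, so the larger entry $x$ comes to sit above the smaller entry $y$; since the stack can only remove from the top, $x$ must be output before $y$, contradicting increasing output. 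Second, the full arrival sequence $\rev(B_1)\cdots\rev(B_m)$ must avoid $231$, since the ordinary stack must sort it to the identity, and by Knuth's criterion a single stack sorts a sequence if and only if it avoids $231$.

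Next I would use the first condition to cut down to finitely many candidate partitions. Because no block may straddle a descent of $\pi$, every descent position is forced to be a block boundary, and the only freedom is whether to subdivide the maximal ascending runs further. Each of $2431$, $3142$, and $3241$ has exactly one ascent, so in each case there are exactly two admissible partitions: the coarse one whose blocks are the maximal ascending runs, and the fine one in which every entry is its own block (for which the arrival sequence is just $\pi$ itself).

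Finally I would exhibit the two arrival sequences in each case and locate a $231$ pattern in each. For $2431$ the sequences are $\rev(24)\,\rev(3)\,\rev(1)=4231$ and $2431$; for $3142$ they are $3\,\rev(14)\,2 = 3412$ and $3142$; and for $3241$ they are $3\,\rev(24)\,1 = 3421$ and $3241$. Every one of these six sequences contains $231$ (for instance $4231$ contains the pattern $2,3,1$, and $3412$ contains $3,4,1$), so the second condition fails for every admissible partition, and none of the three permutations is \textsf{PS}-sortable. The main obstacle is the first paragraph: I must argue rigorously that the atomic nature of the pop-stack transfer genuinely forbids non-increasing blocks, and confirm that the two conditions together account for \emph{every} possible run, so that enumerating admissible partitions is exhaustive; once that framework is in place, the remainder is a finite verification.
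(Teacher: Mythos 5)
Your proof is correct, but it takes a genuinely different route from the paper's. The paper proves this proposition by a direct, ad hoc case analysis of machine states: it traces the possible fates of the entry $2$ in $2431$ (transferred before or after the $4$ enters the pop stack), shows each branch strands the $1$ beneath the $3$, and then explicitly defers the analogous analyses of $3142$ and $3241$ to the reader or to Smith's work on the \textsf{DI} machine. You instead build a small general framework: every sorting run induces a partition of the input into consecutive blocks (one per transfer), each block must be increasing (else the atomic transfer creates an inversion in the stack), and the arrival sequence $\rev(B_1)\cdots\rev(B_m)$ must avoid $231$ by Knuth's criterion; then descents force block boundaries, leaving only two admissible partitions per permutation, and a six-case finite check finishes the job. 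Notably, your two necessary conditions are essentially the necessity half of the paper's own Proposition~\ref{prop-without-queue-divided}, which characterizes \textsf{PS}-sortability via divided permutations avoiding $21$, $2|13$, and $2|3|1$ --- avoiding $21$ within blocks is your increasing-block condition, and given that, a $231$ in the arrival sequence corresponds exactly to an occurrence of $2|13$ or $2|3|1$ (the split $32|1$ being subsumed by $21$). So the paper eventually develops your machinery anyway, but only in Section~\ref{sec-pop-with}, and does not use it for this proposition. What your approach buys is uniformity and completeness: all three permutations are handled by the same exhaustive, finite verification, with no cases left as exercises; what the paper's approach buys is that it needs no preliminary lemma at all, at the cost of detailing only one of the three cases. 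Your one obligation beyond the paper's proof --- rigorously justifying that atomic transfers force increasing blocks and that every run is captured by some admissible partition --- is exactly the argument you sketch in your first paragraph, and it is sound.
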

\begin{proof}
The proof consists of three separate case analyses.  As the cases are similar, we give the details for the first only.  The other two follow from the work of Smith~\cite{smith:a-decreasing-st:} mentioned in the conclusion or can be viewed as exercises for the reader.

Consider attempting to sort the permutation $2431$ with \textsf{PS}.  First the $2$ must be pushed into the pop stack.  Suppose the $2$ is not transferred to the next stack before the $4$ enters the pop stack.  If the entries of the pop stack ever contain a increase when read from top to bottom, then the sorting will clearly fail, so the $4$ and thus also (by the pop property) the $2$ must be transferred to the stack at this point.  However, now there is no way to output the $1$  before the $3$ is forced to be transferred to the stack above the $2$.

Alternatively, if the $2$ is transferred from the pop stack to the stack before the $4$ enters the pop stack, again there is no way output the $1$ before the $3$ and $4$ are transferred to the stack above the $2$.
\end{proof}

Thus the \textsf{PS}-sortable permutations are a subclass of $\Av(2431,3142)$.  The structure of the reverse-complement of this class, $\Av(3142,4213)$, was described by Albert, Atkinson, and Vatter~\cite{albert:inflations-of-g:2x4}.  This structural description rests on the notion of \emph{simple permutations}; the only intervals that are mapped to intervals by such permutations are singletons and their entire domains.  For example, $31542$ is not simple because it maps $\{3,4\}$ to $\{4,5\}$, but $25314$ is simple.  Simple permutations are precisely those that do not arise from a non-trivial inflation, in the following sense.  Given a permutation $\sigma$ of length $m$ and nonempty permutations $\alpha_1,\dots,\alpha_m$, the \emph{inflation} of $\sigma$ by $\alpha_1,\dots,\alpha_m$,  denoted $\sigma[\alpha_1,\dots,\alpha_m]$, is the permutation of length $|\alpha_1|+\cdots+|\alpha_m|$ obtained by replacing each entry $\sigma(i)$ by an interval that is order isomorphic to $\alpha_i$ in such a way that the intervals are order isomorphic to $\sigma$.  For example,
\[
2413[1,132,321,12]=4\ 798\ 321\ 56. 
\]
In particular, the inflation $12[\alpha_1,\alpha_2]$ is called \emph{(direct) sum} and denoted by $\alpha_1\oplus\alpha_2$.  A permutation is \emph{sum decomposable} if it can be expressed as a nontrivial sum, and \emph{sum indecomposable} otherwise.  A sum decomposable permutation can always be expressed as $\alpha_1\oplus\alpha_2$ where $\alpha_1$ is sum \emph{in}decomposable.  The inflation $21[\alpha_1,\alpha_2]$ is similarly called \emph{skew sum} and denoted $\alpha_1\ominus\alpha_2$; we define the terms \emph{skew decomposable} and \emph{indecomposable} analogously.

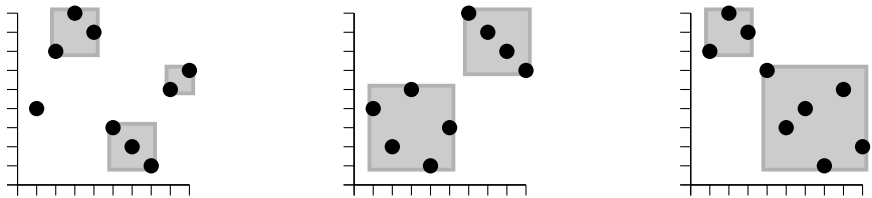
\begin{figure}
\begin{center}
\begin{tabular}{ccccc}
\psset{xunit=0.01in, yunit=0.01in}
\psset{linewidth=0.005in}
\begin{pspicture}(0,0)(93,93)
\psaxes[dy=10, Dy=1, dx=10, Dx=1, tickstyle=bottom, showorigin=false, labels=none](0,0)(90,90)
\psframe[linecolor=darkgray,fillstyle=solid,fillcolor=lightgray,linewidth=0.02in](7,37)(13,43)
\psframe[linecolor=darkgray,fillstyle=solid,fillcolor=lightgray,linewidth=0.02in](17,67)(43,93)
\psframe[linecolor=darkgray,fillstyle=solid,fillcolor=lightgray,linewidth=0.02in](47,7)(73,33)
\psframe[linecolor=darkgray,fillstyle=solid,fillcolor=lightgray,linewidth=0.02in](77,47)(93,63)
\pscircle*(10,40){0.04in}
\pscircle*(20,70){0.04in}
\pscircle*(30,90){0.04in}
\pscircle*(40,80){0.04in}
\pscircle*(50,30){0.04in}
\pscircle*(60,20){0.04in}
\pscircle*(70,10){0.04in}
\pscircle*(80,50){0.04in}
\pscircle*(90,60){0.04in}
\end{pspicture}
&\rule{0.5in}{0pt}&
\psset{xunit=0.01in, yunit=0.01in}
\psset{linewidth=0.005in}
\begin{pspicture}(0,0)(93,93)
\psaxes[dy=10, Dy=1, dx=10, Dx=1, tickstyle=bottom, showorigin=false, labels=none](0,0)(90,90)
\psframe[linecolor=darkgray,fillstyle=solid,fillcolor=lightgray,linewidth=0.02in](7,7)(53,53)
\psframe[linecolor=darkgray,fillstyle=solid,fillcolor=lightgray,linewidth=0.02in](57,57)(93,93)
\pscircle*(10,40){0.04in}
\pscircle*(20,20){0.04in}
\pscircle*(30,50){0.04in}
\pscircle*(40,10){0.04in}
\pscircle*(50,30){0.04in}
\pscircle*(60,90){0.04in}
\pscircle*(70,80){0.04in}
\pscircle*(80,70){0.04in}
\pscircle*(90,60){0.04in}
\end{pspicture}
&\rule{0.5in}{0pt}&
\psset{xunit=0.01in, yunit=0.01in}
\psset{linewidth=0.005in}
\begin{pspicture}(0,0)(93,93)
\psaxes[dy=10, Dy=1, dx=10, Dx=1, tickstyle=bottom, showorigin=false, labels=none](0,0)(90,90)
\psframe[linecolor=darkgray,fillstyle=solid,fillcolor=lightgray,linewidth=0.02in](7,93)(33,67)
\psframe[linecolor=darkgray,fillstyle=solid,fillcolor=lightgray,linewidth=0.02in](37,63)(93,7)
\pscircle*(10,70){0.04in}
\pscircle*(20,90){0.04in}
\pscircle*(30,80){0.04in}
\pscircle*(40,60){0.04in}
\pscircle*(50,30){0.04in}
\pscircle*(60,40){0.04in}
\pscircle*(70,10){0.04in}
\pscircle*(80,50){0.04in}
\pscircle*(90,20){0.04in}
\end{pspicture}
\end{tabular}
\end{center}
\caption{An inflation of $2413$, a sum, and a skew sum.}\label{fig-479832156}
\end{figure}

Every permutation $\pi$ is the inflation of a unique simple permutation, called its \emph{simple quotient}.  If the quotient has length greater than $2$ then the intervals inflating the quotient are uniquely determined by $\pi$ (as Albert and Atkinson~\cite{albert:simple-permutat:}).  If the quotient has length $2$ (i.e., $\pi$ is sum or skew decomposable), then we can enforce uniqueness by insisting that the first interval be sum or skew indecomposable.

A permutation class is \emph{sum closed} if it contains the sum of any two of its members.  Clearly the \textsf{PS}-sortable permutations form a sum closed class because if \textsf{PS} can sort $\pi$ and $\sigma$ then it can sort $\pi\oplus\sigma$ (simply sort the interval corresponding to $\pi$, then sort the interval corresponding to $\sigma$).

\begin{proposition}[Albert, Atkinson, and Vatter~\cite{albert:inflations-of-g:2x4}]
The simple permutations in $\Av(2431,3142)$ are $1$ (trivially), $12$, $21$, and the \emph{parallel alternations} of the form
$$
246\cdots(2m)135\cdots(2m-1)
$$
for $m\ge 2$.
\end{proposition}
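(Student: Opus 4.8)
My plan is to prove the two inclusions separately. For the easy direction I will check that $1$, $12$, $21$, and each parallel alternation $\alpha_m=246\cdots(2m)135\cdots(2m-1)$ is simple and avoids $2431$ and $3142$. The governing observation is that $\alpha_m$ is the juxtaposition of two increasing runs: the even values $2,4,\dots,2m$ fill the first $m$ positions and the odd values $1,3,\dots,2m-1$ fill the last $m$. Since any decreasing subsequence uses at most one entry from each run, $\alpha_m$ has no decreasing subsequence of length three, hence avoids $2431$ (which contains $431$); and because the two increasing runs are separated in position, they cannot be interleaved in the manner an occurrence of $3142$ demands (its two increasing pairs $3,4$ and $1,2$ must interleave positionally), so $\alpha_m$ avoids $3142$ as well. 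For simplicity, any interval of positions meeting both runs is a block of consecutive positions and so contains positions $m$ and $m+1$, which carry the global maximum $2m$ immediately followed by the global minimum $1$; an interval of values containing both $1$ and $2m$ is everything. An interval lying inside one run would consist of consecutive values of a single parity, which is impossible, so only singletons and the whole permutation are intervals. The cases $1,12,21$ are immediate.

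For the converse, let $\pi$ be simple and in the class with $|\pi|=n\ge 4$. Since $\pi$ is not increasing it has at least one descent, and the crux of the argument is to show that it has \emph{exactly} one. Granting this, $\pi$ splits into two increasing runs, $A$ occupying an initial segment of positions and $B$ the remainder, with the unique descent at their junction. Three consequences of simplicity then pin $\pi$ down. First, no two consecutive values may lie in the same run, since two consecutive values inside an increasing run occupy two adjacent positions and hence form a nontrivial interval; thus the values alternate between $A$ and $B$ as one reads $1,2,\dots,n$. Second, $1$ must lie in $B$, for otherwise $\pi(1)=1$ and $\pi=1\oplus\pi'$ is sum decomposable. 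Third, $n$ must lie in $A$, for otherwise $\pi(n)=n$ and $\pi=\pi'\oplus 1$ is sum decomposable. Perfect alternation of the runs together with $1\in B$ and $n\in A$ forces $n=2m$ to be even, $A=\{2,4,\dots,2m\}$, and $B=\{1,3,\dots,2m-1\}$, that is, $\pi=\alpha_m$.

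It remains to establish the claim that a simple permutation in the class has only one descent, which I expect to be the main obstacle. My approach is to extract strong local restrictions from the two forbidden patterns by tracking the positions $q$ and $p$ of the minimum and the maximum. Avoidance of $2431$ forces the entries lying before position $q$ to avoid $132$ (a $132$ there, completed by the minimum, is a $2431$); avoidance of $3142$ forbids a $231$ with a single entry before position $q$ and two entries after it (the minimum playing the role of the $1$); and dual statements forbid a $231$ straddling $p$ with one entry before it and a $312$ straddling $p$ with two entries before it. The plan is to show that a second descent must either manufacture one of these forbidden straddling configurations or expose two entries of consecutive value sitting strictly inside $\pi$ --- a nontrivial interval contradicting simplicity. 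Assembling these cases is the technical heart of the proof.

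Alternatively, the entire converse can be carried out by induction on $n$ using the Schmerl--Trotter theorem that every simple permutation of length $n\ge 2$ contains a simple permutation of length $n-1$ or $n-2$. The base cases are that there are no simple permutations of length $3$ and that $2413$ is the unique simple permutation of length $4$ in the class; the inductive step (the class being closed under patterns) reduces to checking that no one-point extension of $\alpha_m$ is simultaneously simple and in the class, which rules out odd lengths, while $\alpha_{m+1}$ is the only simple in-class two-point extension. On this route the one- and two-point extension analysis is the main obstacle.
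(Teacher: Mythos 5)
Your easy direction is complete and correct: the two-increasing-runs observation disposes of both patterns (any occurrence of $2431$ needs a decreasing subsequence of length three, and an occurrence of $3142$ would force its pairs $3,4$ and $1,2$ into the first and second runs respectively, which cannot interleave positionally), and your interval analysis --- an interval meeting both runs contains the adjacent entries $2m,1$ and hence all values, while an interval inside one run would consist of consecutive values of a single parity --- correctly shows each $\alpha_m$ is simple. The reduction in the converse is also sound: \emph{granting} that a simple permutation in the class has exactly one descent, your three consequences of simplicity (consecutive values cannot lie in the same run, the value $1$ cannot lie in the first run, the value $n$ cannot lie in the second) do force $\pi=\alpha_m$. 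The problem is that the one-descent claim is never proved. You extract some true local restrictions from the forbidden patterns and then state a plan ("show that a second descent must either manufacture one of these forbidden straddling configurations or expose two entries of consecutive value forming a nontrivial interval"), but no case analysis is carried out. Since a permutation has exactly one descent precisely when it is a juxtaposition of two increasing runs, this claim \emph{is} the content of the proposition for this class; deferring it leaves the converse, and hence the proposition, unproved. The same criticism applies to your alternative route: invoking Schmerl--Trotter is a legitimate strategy, but the one- and two-point extension analysis you postpone is again the entire difficulty, and you verify neither the base case claim (that $2413$ is the only simple length-$4$ permutation in the class) nor the extension step.

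For calibration: the paper itself does not prove this statement --- it cites it from Albert, Atkinson, and Vatter --- so the standard your write-up must meet is a self-contained argument, and yours stops exactly where the real work begins. To close the gap along your preferred route, you would need to fix two descents, say at positions $i<j$, and argue case by case on the relative values of $\pi(i),\pi(i+1),\pi(j),\pi(j+1)$ and on the entries separating them, showing in each case that avoidance of $2431$ and $3142$ traps a set of entries into forming a proper interval. This is a genuine, multi-case argument (it occupies real space in the cited source), not a routine verification, so as it stands the proposal is a correct skeleton with its load-bearing lemma missing.
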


We aim to produce a structural description of $\Av(2431,3142,3241)$ which will allow us to show that every permutation in the class is \textsf{PS}-sortable.  Because each of these basis elements is sum indecomposable, this class is sum closed (as is the class of \textsf{PS}-sortable permutations).  For skew sums $\pi\ominus\sigma$, notice that to avoid $2431$ and $3241$, $\pi$ must avoid $132$ and $213$, but there are no further restrictions (except that $\sigma$ must lie in the larger class, obviously).  The class $\Av(132,213)$ is easily seen to be the \emph{reverse layered permutations}, i.e., those of the form
$$
\iota_1\ominus\cdots\ominus\iota_m
$$
where each $\iota_1,\dots,\iota_m$ is increasing.

It remains to consider inflations of parallel alternations of the form $246\cdots(2m)135\cdots(2m-1)$.    In order to avoid $3241$, all even entries in such a parallel alternation except the greatest may only be inflated by increasing intervals.  Furthermore, in order to avoid $2431$, the greatest even entry also may only be inflated by an increasing interval.  Inflations of the odd entries, however, have no further restrictions.  We have therefore obtained the following structural decomposition of this class.

\begin{proposition}\label{prop-2431-3142-3241-structure}
The class $\Av(2431, 3142, 3241)$ consists precisely of permutations of the form
\begin{enumerate}
\item[(a)] $\pi\oplus\sigma$ where $\pi,\sigma\in\Av(2431, 3142, 3241)$,
\item[(b)] $\pi\ominus\sigma$ where $\pi\in\Av(132,213)$ and $\sigma\in\Av(2431, 3142, 3241)$, and
\item[(c)] inflations of parallel alternations $246\cdots(2m)135\cdots(2m-1)$ for $m\ge 2$ where the even entries are inflated by increasing intervals and the odd entries are inflated by intervals in $\Av(2431, 3142, 3241)$.
\end{enumerate}
\end{proposition}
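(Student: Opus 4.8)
The plan is to read off the decomposition from the simple-quotient machinery already in place, organising the argument according to the simple quotient of a permutation in the class and then, in each case, pinning down exactly which inflations avoid all three basis patterns. Since $\Av(2431,3142,3241)\subseteq\Av(2431,3142)$ and the simple quotient of any $\pi$ is a pattern of $\pi$ and hence lies in the class, every $\pi$ in the class is the inflation of one of the simple permutations listed above: the singleton, $12$, $21$, or a parallel alternation $246\cdots(2m)135\cdots(2m-1)$. The singleton is the base of the recursion and lies trivially in the class; the other three quotient types will produce parts (a), (b), and (c). As ``consists precisely of'' is a two-sided claim, for each case I would verify both that membership forces the stated shape and that every permutation of that shape lies back in the class.

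If the quotient is $12$ then $\pi=\alpha\oplus\beta$ is sum decomposable; sum-closure of the class gives $\alpha,\beta\in\Av(2431,3142,3241)$, and conversely any such sum lies in the class, which is precisely case (a) (taking $\alpha$ sum indecomposable makes the splitting unique). If the quotient is $21$ then $\pi=\alpha\ominus\beta$, and the content here is the computation recorded just before the statement: when $\beta$ is nonempty, a smallest, rightmost entry drawn from $\beta$ turns any $132$ in $\alpha$ into a $2431$ and any $213$ in $\alpha$ into a $3241$, while no $3142$ can straddle the two blocks, since its low second entry would have to appear among the large, early entries of $\alpha$. Thus membership forces $\alpha\in\Av(132,213)$ with $\beta$ in the class, and $\Av(132,213)$ is exactly the reverse layered permutations $\iota_1\ominus\cdots\ominus\iota_m$; reading the computation backwards gives the converse, yielding case (b).

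The substantive case is a parallel-alternation quotient, giving (c). I would prove that an inflation of $246\cdots(2m)135\cdots(2m-1)$ lies in the class if and only if every even entry is inflated by an increasing interval and every odd entry by a member of the class. Necessity of the even condition is by exhibiting forbidden patterns: a descent inside a non-maximal even interval, together with a larger even entry to its right and the odd entry lying just below it, forms a $3241$; a descent inside the maximal even interval, together with a smaller even entry to its left and the smallest odd entry, forms a $2431$. Necessity of class-membership of every interval is immediate, as each interval is itself a subpermutation. For sufficiency one must show that the constrained inflations avoid all three patterns: occurrences of the \emph{simple} pattern $3142$ localise by the Albert--Atkinson property --- a simple pattern contained in an inflation is contained in the quotient or in a single interval, and $3142$ lies in neither --- while for the non-simple patterns $2431$ and $3241$ I would check directly that the increasing even intervals leave no room for a straddling copy.

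The main obstacle is exactly this last case. Because $2431$ and $3241$ are \emph{not} simple, their occurrences need not localise to one interval or to the quotient, so both the necessity and the sufficiency of the ``even intervals increasing'' condition require a concrete inspection of how a forbidden pattern could distribute its four entries across the even and odd blocks of the alternation. I expect this bookkeeping --- enumerating the finitely many ways a copy of $2431$ or $3241$ could span several intervals --- to be the only delicate point; the singleton base case and the $12$ and $21$ quotients are routine given the sum-closure and skew-sum computations already in hand.
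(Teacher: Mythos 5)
Your proposal is correct and follows essentially the same route as the paper: the paper's (very terse) argument in the paragraph preceding the proposition likewise combines the substitution decomposition with the Albert--Atkinson--Vatter description of the simple permutations in $\Av(2431,3142)$, splits into the quotients $12$, $21$, and the parallel alternations, and uses the same forbidden-pattern constructions to force $\Av(132,213)$ in case (b) and increasing even intervals in case (c). Your write-up is simply more explicit than the paper's, notably in verifying sufficiency in case (c) (localization of the simple pattern $3142$ plus the finite check that no copy of $2431$ or $3241$ can straddle the intervals), which the paper asserts without detail.
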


We now describe how to sort all of the permutations in $\Av(2431,3142,3241)$, verifying that this is indeed the class of \textsf{PS}-sortable permutations.

\begin{theorem}\label{thm-PS-basis}
The \textsf{PS}-sortable permutations are precisely $\Av(2431, 3142, 3241)$.
\end{theorem}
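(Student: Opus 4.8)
The inclusion $\{\text{\textsf{PS}-sortable permutations}\}\subseteq\Av(2431,3142,3241)$ is the easy half: Proposition~\ref{prop-PS-no-3} shows that $2431$, $3142$, and $3241$ are not \textsf{PS}-sortable, and since the \textsf{PS}-sortable permutations form a class (a downset under containment), no sortable permutation can contain any of these three, so every sortable permutation avoids all of them. The substance is the reverse inclusion, that every member of $\Av(2431,3142,3241)$ is \textsf{PS}-sortable, which I would prove by strong induction on length using the three-way structural decomposition of Proposition~\ref{prop-2431-3142-3241-structure}. Two facts guide every construction. First, a successful run must keep the ordinary (second) stack increasing when read from top to bottom. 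Second, as recorded in the proof of Proposition~\ref{prop-PS-no-3}, a transfer preserves this invariant precisely when the pop stack is \emph{decreasing} from top to bottom at the instant it is dumped, since the pop operation reverses the pop stack's contents onto the ordinary stack. Case~(a), where $\tau=\pi\oplus\sigma$, needs no work: the text already observes that the \textsf{PS}-sortable class is sum closed, and both summands are shorter members of $\Av(2431,3142,3241)$, hence \textsf{PS}-sortable by induction.

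For case~(b), write $\tau=\pi\ominus\sigma$ with $\pi=\iota_1\ominus\cdots\ominus\iota_m$ reverse layered (each $\iota_j$ an increasing run, the runs decreasing in value) and $\sigma\in\Av(2431,3142,3241)$. Since the entries of $\pi$ precede and exceed all entries of $\sigma$, I would first park $\pi$ at the bottom of the ordinary stack, increasing from top to bottom, by processing one run at a time: input all of $\iota_j$ into the pop stack, then transfer. Each such transfer is legal because a single increasing run sits decreasing in the pop stack, and reversal lands it increasing on top of the ordinary stack with its least entry uppermost; because the runs arrive in decreasing value order, each new run's largest entry is smaller than the previous top, so the top-to-bottom-increasing invariant survives and $\pi$ ends up wholly on the ordinary stack with its least entry on top. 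I would then sort $\sigma$ by the inductive hypothesis, running that procedure verbatim on the now-nonempty machine; as every entry of $\sigma$ is smaller than every parked entry of $\pi$, the $\sigma$-entries always rest above $\pi$, the invariant persists, $\sigma$ is output first (as $1,\dots,|\sigma|$), and finally $\pi$ is popped off in order.

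For case~(c), $\tau$ is an inflation of $246\cdots(2m)135\cdots(2m-1)$ in which the even entries are inflated by increasing intervals $E_1,\dots,E_m$ and the odd entries by $O_1,\dots,O_m\in\Av(2431,3142,3241)$. The input presents $E_1,\dots,E_m$ (in increasing value order) followed by $O_1,\dots,O_m$, whereas the required output interleaves them by value as $O_1,E_1,O_2,E_2,\dots,O_m,E_m$. I would input all of $E_1,\dots,E_m$ into the pop stack \emph{before} transferring even once; because these blocks arrive in increasing value order, the pop stack is decreasing from top to bottom, so a single transfer is legal and deposits them increasing from top to bottom on the ordinary stack, with $E_1$ uppermost and $E_m$ at the bottom. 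Then for $k=1,\dots,m$ in turn I would sort the block $O_k$ by the inductive hypothesis (its entries are at that moment the smallest not yet output, so they lie above $E_k,\dots,E_m$ and the invariant holds), output all of $O_k$, and then pop the already-sorted block $E_k$ to the output. This realizes the output order exactly.

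The main obstacle is the relativization principle on which both (b) and (c) rest: \emph{a procedure that \textsf{PS}-sorts a permutation $\rho$ from the empty machine can be executed unchanged when the ordinary stack is preloaded with entries all larger than those of $\rho$, still sorting the $\rho$-entries into their contiguous, relatively increasing output block without disturbing what lies beneath.} Making this precise requires checking that every transfer issued by the borrowed procedure remains legal in the augmented configuration (it does, since the $\rho$-entries stay above the larger preloaded entries and the two regimes never interact) and that no output prematurely exposes a preloaded entry (it cannot, as all $\rho$-entries are smaller and are exhausted first). Once this lemma is in hand, the remaining verification—that the three constructions above genuinely produce $1,2,\dots,n$—is routine bookkeeping.
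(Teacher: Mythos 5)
Your proposal is correct and takes essentially the same approach as the paper: induction on length using the three-case decomposition of Proposition~\ref{prop-2431-3142-3241-structure}, with identical sorting strategies (sum closure for case~(a), transferring each increasing run of the reverse layered prefix one at a time for case~(b), and loading all the even-entry inflations into the pop stack before a single transfer, then alternating sorted odd blocks with stack pops, for case~(c)). The only difference is one of exposition---you state explicitly the ``relativization'' lemma (that an inductive sorting procedure runs unchanged above preloaded larger entries) and the easy inclusion via Proposition~\ref{prop-PS-no-3}, both of which the paper leaves implicit.
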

\begin{proof}
Choose an arbitrary $\pi\in\Av(2431, 3142, 3241)$.  Using induction on the length of $\pi$, we show that $\pi$ is \textsf{PS}-sortable.  The base case is trivial, as \textsf{PS} can sort $1$.  If $\pi=\sigma\oplus\tau$ for shorter permutations $\sigma$ and $\tau$ in this class, then by induction, the \textsf{PS} machine can sort (and output) $\sigma$ and then sort $\tau$.  If $\pi$ is skew indecomposable, then we know from Proposition~\ref{prop-2431-3142-3241-structure} that
$$
\pi=\iota_1\ominus\cdots\iota_m\ominus\sigma
$$
for increasing permutations $\iota_1,\dots,\iota_m$ and an arbitrary $\sigma\in\Av(2431, 3142, 3241)$.  To sort permutations of this form, we push each $\iota_k$, in turn, onto the pop stack and then pop it into the stack.  After having performed this operation on $\iota_1,\dots,\iota_m$, we are left with their entries sitting in increasing order in the stack, and thus can (by induction) sort and output $\sigma$ and then output the entries of $\iota_1,\dots,\iota_m$.

Finally suppose $\pi$ is of the form (c) in Proposition~\ref{prop-2431-3142-3241-structure}, so
$$
\pi=246\cdots(2m)135\cdots (2m-1)[\iota_1,\dots,\iota_m,\sigma_1,\dots,\sigma_m]
$$
where $\iota_1,\dots,\iota_m$ are increasing and $\sigma_1,\dots,\sigma_m\in\Av(2431, 3142, 3241)$.  To sort these permutations, we first push all entries of $\iota_1,\dots,\iota_m$ onto the pop stack and then pop them all into the stack.  This leaves the entries of $\iota_1,\dots,\iota_m$ in increasing order on the stack.  We then sort and output $\sigma_1$ (which can be done by induction), then output $\iota_1$, then sort and output $\sigma_2$, then output $\iota_2$, and so on.
\end{proof}

Proposition~\ref{prop-2431-3142-3241-structure} also leads almost immediately to the enumeration of this class.

\begin{theorem}\label{thm-PS-enum}
The \textsf{PS}-sortable permutations are enumerated by the generating function
$$
\frac{1-3x+2x^2-\sqrt{1-6x+5x^2}}{2x(2-x)},
$$
sequence \OEISlink{A033321} in the \OEISref.
\end{theorem}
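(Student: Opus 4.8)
The plan is to turn the structural description of Proposition~\ref{prop-2431-3142-3241-structure} into a single algebraic equation for the generating function $f=f(x)=\sum_{n\ge 1}|\C_n|x^n$ of the class $\C=\Av(2431,3142,3241)$ (counting nonempty permutations), and then to solve that equation. Because $\C$ is sum closed and, by the uniqueness of the simple quotient (Albert and Atkinson), every nonempty permutation is uniquely an ordered sum $\alpha_1\oplus\cdots\oplus\alpha_k$ of sum-indecomposable members, I would first record the standard relation
\[
f=\frac{f_\oplus}{1-f_\oplus},\qquad\text{equivalently}\qquad f_\oplus=\frac{f}{1+f},
\]
where $f_\oplus$ is the generating function of the sum-indecomposable permutations in $\C$.

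Next I would enumerate the sum-indecomposables by classifying them according to their (unique) simple quotient, which by the results quoted in the excerpt is $1$, $21$, or a parallel alternation of length at least $4$. The quotient $1$ contributes only the singleton, i.e.\ $x$. A permutation with quotient $21$ is skew decomposable, and by part~(b) of Proposition~\ref{prop-2431-3142-3241-structure} its first skew component is forced to be an increasing block $12\cdots k$ with $k\ge 1$, followed by an arbitrary nonempty $\rho\in\C$; taking this first component as the canonical top block makes the decomposition a bijection, contributing $\frac{x}{1-x}f$. Finally, by part~(c), a permutation whose quotient is the parallel alternation $246\cdots(2m)135\cdots(2m-1)$ is an inflation by $m$ nonempty increasing intervals and $m$ nonempty intervals of $\C$, whose intervals are uniquely determined since the quotient has length at least $4$; writing $u=\frac{x}{1-x}f$, this family contributes $u^m$ for each $m$, so $\sum_{m\ge 2}u^m=\frac{u^2}{1-u}$.

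Collecting the three contributions gives $f_\oplus=x+u+\frac{u^2}{1-u}$, and after the simplification $u+\frac{u^2}{1-u}=\frac{u}{1-u}=\frac{xf}{1-x-xf}$, the relation $f_\oplus=\frac{f}{1+f}$ becomes
\[
\frac{f}{1+f}=x+\frac{xf}{1-x-xf}.
\]
Clearing denominators, the right-hand side factors as $x(1-x)(1+f)^2$, and the whole equation collapses to the quadratic
\[
x(2-x)f^2-(1-3x+2x^2)f+x(1-x)=0 .
\]
Solving for $f$ and choosing the branch with $f(0)=0$ (the other branch blows up at $x=0$) yields the stated expression, the discriminant conveniently simplifying to $1-6x+5x^2$; a quick expansion checks $f=x+2x^2+\cdots$, as it must.

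The main obstacle is not the algebra, which is routine, but the bookkeeping required to make the split of the sum-indecomposables into these three families genuinely unambiguous, so that their generating functions may simply be added: I must verify that the first skew component of a skew-decomposable element of $\C$ is exactly an increasing block (so that the quotient-$21$ family is counted by $\frac{x}{1-x}f$ without overlap), and that the intervals inflating a parallel alternation are uniquely determined and nonempty. Once this is pinned down, the derivation of the functional equation and its solution are immediate.
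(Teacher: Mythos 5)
Your proof is correct and takes essentially the same approach as the paper: both translate the structural decomposition of Proposition~\ref{prop-2431-3142-3241-structure} (singleton, sums, skew sums with increasing first component, and inflations of parallel alternations) into a functional equation for $f$ and solve the resulting quadratic, whose discriminant is $1-6x+5x^2$. Your equation $\frac{f}{1+f}=x+\frac{xf}{1-x-xf}$ is exactly the paper's equation $f=x+\frac{f^2}{1+f}+\frac{xf}{1-x}+\frac{(xf)^2}{(1-x)(1-x-xf)}$ with the sum-decomposable contribution $\frac{f^2}{1+f}$ moved to the left-hand side, so the two derivations are algebraically identical.
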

\begin{proof}
Let $f$ denote the generating function for the \textsf{PS}-sortable permutations and $f_\oplus$ (resp., $f_\ominus$) denote the generating function for the sum (resp., skew) decomposable \textsf{PS}-sortable permutations.

The sum indecomposable \textsf{PS}-sortable permutations are therefore counted by $f-f_\oplus$.  Because every sum decomposable permutation can be expressed uniquely as the sum of a sum indecomposable permutation with an arbitrary permutation, we see that $f_\oplus=(f-f_\oplus)f$.  Solving this for $f_\oplus$ yields
$$
f_\oplus=\frac{f^2}{1+f}.
$$

Now consider skew decomposable permutations.  The reverse layered permutations contain a unique skew indecomposable permutation of each length (the increasing permutation), so by Proposition~\ref{prop-2431-3142-3241-structure}, the contribution of skew decomposable permutations is
$$
f_\ominus=\frac{xf}{1-x}.
$$

Finally, the contribution of permutations of the form (c) in Proposition~\ref{prop-2431-3142-3241-structure} is given by
$$
\sum_{m\ge 2} \left(\frac{xf}{1-x}\right)^m=\frac{(xf)^2}{(1-x)(1-x-xf)}.
$$

Combining these quantities (and the contribution of the permutation $1$) we have
$$
f
=
x+
\frac{f^2}{1+f}
+
\frac{xf}{1-x}
+
\frac{(xf)^2}{(1-x)(1-x-xf)},
$$
and solving this for $f$ completes the proof of the theorem.
\end{proof}

The generating function from Theorem~\ref{thm-PS-enum} has arisen at least twice before in the study of permutation patterns.  The Theory of Computing Research Group at the University of Otago~\cite{albert:sorting-classes:} showed that it counts $\Av(2431, 4231, 4321)$, while Brignall, Huczynska, and Vatter~\cite{brignall:simple-permutat:} showed that it enumerates $\Av(2143, 2413, 3142)$.  None of these three classes are symmetries of each other so this is example of ``Wilf-equivalence''.


%
%
%
%

\section{With a Queue Between --- \textsf{PQS}}\label{sec-pop-with}

There is another way to characterize the permutations which are \textsf{PS}-sortable, which we introduce now because we will use it to characterize the \textsf{PQS}-sortable permutations.  A {\it divided permutation\/} is a permutation equipped with one or more dividers $|$, i.e., $\pi_1|\pi_2|\cdots|\pi_t$.  We refer to $\pi_1|\pi_2|\cdots|\pi_t$ as a {\it division\/} of the concatenated permutation $\pi_1\pi_2\cdots\pi_t$, and we refer to each $\pi_i$ as a {\it block\/} of this division.  We say that the divided permutation $\sigma_1|\sigma_2|\cdots|\sigma_s$ is contained in the divided permutation $\pi_1|\pi_2|\cdots|\pi_t$ if $\pi_1\pi_2\cdots\pi_t$ contains a subsequence order isomorphic to $\sigma_1\sigma_2\cdots\sigma_s$ in which the entries corresponding to each $\sigma_i$ come from the same block, and no other entries of this subsequence come from that block.  For example:
\begin{itemize}
\item $513|4|2$ contains $32|1$ because of the subsequence $532$, but
\item $32|1$ is not contained in $51|34|2$ despite the subsequence $532$.
\end{itemize}
In particular, if $\sigma$ contains no dividers, then $\sigma$ is contained in 
$\pi_1|\pi_2|\cdots|\pi_t$ if and only if $\sigma$ is contained in a single block $\pi_i$.

\begin{proposition}\label{prop-without-queue-divided}
The permutation $\pi$ can be sorted by \textsf{PS} if and only if divisions can be added to $\pi$ to obtain a divided permutation which avoids $21$, $2|13$, and $2|3|1$.
\end{proposition}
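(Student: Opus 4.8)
The plan is to prove both directions by tracking the contents of the two stacks through a sort and translating the statement ``the sort never gets stuck'' into the three forbidden divided patterns. The central observation is that a \textsf{PS}-sort is essentially determined by its transfers: the entries input into the pop stack between two consecutive transfers (or before the first transfer) are exactly the entries popped together onto the stack, and since inputs occur in the order of $\pi$ these groups are contiguous and so record a division $\pi_1|\cdots|\pi_t$. Conversely, any division prescribes a candidate schedule of transfers. I would fix this correspondence once and for all and prove the two implications separately, using throughout the standard fact that a stack can output in increasing order if and only if it never holds a larger entry directly above a smaller one: once such an inversion appears on the stack it is irreparable, since the buried smaller entry can never be emitted before the larger one above it.

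For necessity, I would start from a successful sort and examine the division it induces. Avoidance of $21$ within a block is immediate and echoes the observation in the proof of Proposition~\ref{prop-PS-no-3}: the entries of a block enter the pop stack in their input order and are transferred as a reversed run onto the stack, so a descent inside a block would plant an inversion on the stack. The two cross-block patterns I would rule out by a ``trapping'' argument. Suppose the division contained $2|13$, say $q<p<r$ with $p$ in an earlier block than the block holding $q$ and $r$. Because $q<p$ lies in a later block, $q$ cannot be output until that later block has been transferred, and hence neither can $p$; so $p$ still sits on the stack when the later block lands on top of it, placing $r>p$ above $p$ and creating a fatal inversion. The pattern $2|3|1$ is handled identically, the only change being that the small value is trapped in a still-later block, which is what keeps $p$ on the stack long enough for the middle block's entry (exceeding $p$) to be dropped above it.

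For sufficiency I would run the greedy schedule dictated by a division avoiding the three patterns: process the blocks left to right, and just before transferring each block, output every entry that can currently be output (the next required value, while it sits on top of the stack), then input the whole block and transfer it. The key claim, proved while maintaining the invariant that the stack is always increasing from top to bottom, is that immediately before transferring $\pi_i$ every entry still on the stack exceeds $\max\pi_i$. Granting this, the block, which avoids $21$ and is therefore increasing, lands as an increasing run beneath which everything is larger, so the invariant persists; after the last transfer the fully loaded increasing stack is simply emptied in order.

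The hard part will be that key claim, and it is exactly where the forbidden patterns are consumed. I would argue by contradiction: if $b=\max\pi_i$ exceeded the current stack minimum $x$, then $x$ belongs to an earlier block, and the next value still to be output, call it $r$, is neither on the stack nor yet read and hence lies in block $i$ or later, with $r<x<b$. If $r$ lies in a block strictly after $i$, then $x$, $b$, $r$ occupy three distinct blocks in left-to-right order and form a $2|3|1$; if instead $r$ lies in block $i$ itself, then $x$ from the earlier block together with $r$ and $b$ from block $i$ form a $2|13$ (here using that block $i$ is increasing, so $r$ precedes $b$). Either way the hypothesis is contradicted. I expect the remaining difficulties to be purely bookkeeping: verifying that the induced blocks really constitute a contiguous division, that ``next value not on the stack'' forces it into block $i$ or later, and that the created inversion is genuinely fatal.
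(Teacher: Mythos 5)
Your proposal is correct and follows essentially the same route as the paper: divisions correspond to transfer moments, necessity is argued by showing each forbidden divided pattern traps an entry and forces a fatal inversion on the stack, and sufficiency shows that pattern-avoidance prevents any inversion from ever forming (your two cases, $r$ in block $i$ giving $2|13$ and $r$ in a later block giving $2|3|1$, match the paper's case analysis, with the paper's third case $2|31$ absorbed into your use of the fact that blocks are increasing). Your version is simply a more detailed, algorithmic rendering---explicit greedy schedule, stack-increasing invariant, and key claim---of the paper's terser argument.
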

\begin{proof}
We view the divisions as marking the moments when we transfer all contents of the pop stack to the stack (with a final transfer occurring at the end of reading the permutation).  Thus if there is no such division of $\pi$, in any sorting of this permutation there will come a time when either:
\begin{itemize}
\item the pop stack contains an increase when read from top to bottom ($21$),
\item the stack contains an entry which lies between two entries of the pop stack in value ($2|13$), or
\item the stack contains a decrease when read from top to bottom ($2|3|1$).
\end{itemize}
Any of these three situations will cause the sorting to fail.

Conversely, suppose that divisions can be added to $\pi$ to obtain a divided permutation which avoids $21$, $2|13$, and $2|3|1$.  To show $\pi$ is sortable, we need to show that none of the transfers dictated by these divisions forces an inversion in the stack.  We know a set of entries moved by a single transfer will not form an inversion in the stack since the divided permutation avoids $21$.  The only other way an inversion could be forced within the stack is if there is an entry $2$ that was previously transferred to the stack, an entry $3$ that is transferred to the stack later, and an entry $1$ that is transferred at the same time as or later than the $3$ (and thus not allowing the $2$ to be output before the $3$ enters the stack).  Notice these \textsf{PS} movements imply that there is a $2|13$, a $2|31$ (which means there is a $21$), or a $2|3|1$.  This shows that $\pi$ is sortable by \textsf{PS}.
\end{proof}

The analogue of Proposition~\ref{prop-without-queue-divided} for the \textsf{PQS} machine is the following.

\begin{proposition}\label{prop-with-queue-divided}
The permutation $\pi$ can be sorted by \textsf{PQS} if and only if divisions can be added to $\pi$ to obtain a divided permutation which avoids $132$, $2|13$, $32|1$, and $2|3|1$.
\end{proposition}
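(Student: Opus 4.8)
The plan is to reduce \textsf{PQS}-sortability to ordinary stack-sortability by exploiting the queue, and then to translate the resulting $231$-avoidance condition into the language of divided permutations.

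First I would fix the operational meaning of a division $\pi_1|\pi_2|\cdots|\pi_t$: exactly as in the proof of Proposition~\ref{prop-without-queue-divided}, the dividers mark the moments at which the pop stack is emptied. The crucial observation is that, once these pop times are fixed, the queue completely determines the \emph{order} (though not the timing) in which entries are presented to the final stack. Indeed, popping the block $\pi_i$ reverses it, and the FIFO queue preserves that order, so the stack receives exactly the sequence
$$
S=\rev(\pi_1)\,\rev(\pi_2)\cdots\rev(\pi_t),
$$
independently of how we interleave the pop-stack, queue, and stack operations. Since the stack is the final device, a successful sort must output the identity; by Knuth's theorem a sequence is sortable by a single stack precisely when it avoids $231$. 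Hence $\pi$ is \textsf{PQS}-sortable if and only if there is some division for which $S$ avoids $231$. For the forward direction one simply reads the division off of a successful sort; for the reverse direction one runs the pop stack according to the division, buffers everything in the queue, and then applies the greedy stack-sort to $S$.

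It then remains to prove the combinatorial heart of the statement: for a fixed division, $S$ contains $231$ if and only if the divided permutation $\pi_1|\cdots|\pi_t$ contains one of $132$, $2|13$, $32|1$, or $2|3|1$. I would prove this by taking an occurrence of $231$ in $S$---say entries playing the roles of ``$2$'', ``$3$'', and ``$1$''---and splitting into cases according to how these three entries are distributed among the blocks. Since each block forms a contiguous segment of $S$ and is internally reversed, block order in $S$ agrees with block order in $\pi$ while within-block order is reversed. The four cases are: all three entries in one block (reversal turns the $231$ into a $132$ inside that block); the ``$2$'' and ``$3$'' in an earlier block and the ``$1$'' in a later one (reversal exposes a $32|1$); the ``$2$'' in an earlier block with the ``$3$'' and ``$1$'' together in a later block (reversal exposes a $2|13$); and the three entries in three distinct blocks (a $2|3|1$, unaffected by reversal). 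Running each implication in reverse recovers a $231$ in $S$ from each forbidden divided pattern, giving the equivalence.

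The main obstacle is bookkeeping rather than ideas. On the reduction side I must argue carefully that the FIFO queue pins down the input word $S$ to the stack regardless of timing, and that every consecutive division is realizable as an actual pop schedule, so that quantifying over divisions genuinely matches quantifying over runs of the machine. On the correspondence side, the care lies in tracking the effect of block-reversal on the three roles of a $231$: one must check that no occurrence is missed---in particular that two of the three entries cannot lie in one block with the third strictly between them in $S$, which is ruled out by contiguity of blocks---and that the left-to-right orientation of the blocks matches that of the divided patterns. Combining these two points with the stack-sortability reduction then yields the proposition.
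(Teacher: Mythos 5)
Your proposal is correct and follows essentially the same route as the paper: both reduce \textsf{PQS}-sortability via Knuth's theorem to the condition that the word $\pi_1^{\rm r}\pi_2^{\rm r}\cdots\pi_t^{\rm r}$ entering the final stack avoids $231$, and both establish the same four-case correspondence (all in one block $\leftrightarrow 132$; ``$2$'' alone in an earlier block $\leftrightarrow 2|13$; ``$2$'',``$3$'' together in an earlier block $\leftrightarrow 32|1$; three blocks $\leftrightarrow 2|3|1$). The only difference is expository: you spell out the timing-independence of the queue and run the case analysis in both directions, where the paper compresses these into ``the other direction follows immediately.''
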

\begin{proof}
A permutation can be sorted with a stack if and only if it avoids $231$, so we need to show that we can fill the queue between the pop stack and the stack with a $231$-avoiding permutation if and only if $\pi$ can be divided in the manner specified.

First suppose that $\pi$ can be divided as $\pi_1|\pi_2|\cdots|\pi_t$ so that this division avoids the four divided permutations $132$, $2|13$, $32|1$, and $2|3|1$.  Consider pushing each element of $\pi_1$ into the pop stack and then popping the entire pop stack into the queue, then pushing each element of $\pi_2$ onto the pop stack and then popping the entire pop stack into the queue, and so on.  This fills the queue with the permutation $\pi_1^{\rm r}\pi_2^{\rm r}\cdots\pi_t^{\rm r}$.  Now consider the four different ways this permutation could contain $231$: if all three entries are in $\pi_i^{\rm r}$ then $\pi_i$ contains $132$, if the first entry is in $\pi_i^{\rm r}$ and the other two are in $\pi_j^{\rm r}$ for $i<j$ then $\pi_i|\pi_j$ contains $2|13$, if the first two entries are in $\pi_i^{\rm r}$ and the last is in $\pi_j^{\rm r}$ for $i<j$ then $\pi_i|\pi_j$ contains $32|1$, and finally, if all three entries are in different blocks, then $\pi_1|\pi_2|\cdots|\pi_t$ contains $2|3|1$.  The other direction follows immediately.
\end{proof}

The \textsf{PS}-sortable permutations can be characterized by a finite number of forbidden divided patterns (Proposition~\ref{prop-without-queue-divided}) and by a finite basis (Theorem~\ref{thm-PS-basis}).  However, it does \emph{not} follow that every class defined by finitely many forbidden divided patterns has a finite basis, as we show in the next section.  Nor is it apparent how to convert such a list of divided patterns into a basis.  In the case of the \textsf{PQS} machine, we have not been able to verify that it is finitely based, although computations performed by Michael Albert strongly suggests this to be the case.

\begin{conjecture}\label{conj-PQS-basis}
The class of \textsf{PQS}-sorting permutations consists of $108$ permutations, all of length at most $9$.
\end{conjecture}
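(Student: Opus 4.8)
The plan is to combine the divided-pattern characterization of Proposition~\ref{prop-with-queue-divided} with a finiteness argument and a finite computation. First I would restate sortability in the cleanest operational form: a permutation $\pi$ of length $n$ is \textsf{PQS}-sortable if and only if the positions $1,\dots,n$ can be partitioned into consecutive intervals $I_1<\cdots<I_t$ such that reversing $\pi$ within each $I_k$ produces a $231$-avoiding word. This is exactly the content of Proposition~\ref{prop-with-queue-divided}, since popping a block of the pop stack into the queue reverses it and the final stack succeeds precisely when the queue word avoids $231$. In this form sortability is manifestly decidable---there are only $2^{n-1}$ divisions---and in fact a left-to-right dynamic program decides it efficiently, carrying as its state the current output target together with the relevant order statistics of the pop stack and the decreasing contents of the stack. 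I would also record the reduction coming from Proposition~\ref{prop-two-stack-dual} and the discussion following it: since the two-stack dual $\pi\mapsto\pi^{\rm d}$ is a composition of the order-automorphisms reverse and inverse, the \textsf{PQS}-sortable class is the dual image of the \textsf{SP}-sortable class, so their bases are dual to one another. This lets me work with whichever machine is more convenient and transport the answer by an automorphism that preserves both lengths and antichains.

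With the decision procedure in hand the argument splits into two tasks. The first, and the genuine difficulty, is to prove that the basis is finite by establishing an a priori bound $N$ on the length of its elements. The second is routine once the first is done: enumerate all permutations of length at most $N$, test each with the decision procedure, collect the minimal non-sortable permutations, and verify that there are exactly $108$ of them and that the longest has length $9$. This is precisely the finite computation that Albert's data already strongly supports; any proven finite bound makes it rigorous, and the computation itself then confirms the sharp value $9$.

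The obstacle is therefore the length bound, and here I would emphasize that no soft argument is available: as the authors remark for the next section, a class cut out by finitely many forbidden divided patterns need not be finitely based, so the four patterns $132$, $2|13$, $32|1$, and $2|3|1$ must be used in an essential way. I see two plausible routes. The first mirrors Proposition~\ref{prop-2431-3142-3241-structure}: analyze the \textsf{PQS}-sortable class through its simple permutations and the permitted inflations, deriving a structural decomposition from which a bound on minimal obstructions can be read off directly. The second is a direct reducibility argument---show that every non-sortable permutation of length greater than $9$ contains a strictly shorter non-sortable permutation and so cannot be minimal. For this I would exploit the bounded state of the deciding dynamic program: presenting sortability as a finite automaton reading an insertion encoding of $\pi$ would let a pumping-type argument delete a repeated state, removing a block of entries while preserving non-sortability and thereby bounding the length of any basis element.

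The reason I expect finiteness to be the hard step---and the reason the naive reducibility argument is delicate---is that the freedom to choose the division is \emph{global}: deleting a single entry can change which division is optimal, so an entry that looks locally removable from one sorting division may be essential to every other. Making the automaton route work thus requires showing that a genuinely bounded amount of state governs sortability as one scans the permutation, after which the pumping argument bounds $N$ and the finite search completes the proof. Checking that the resulting bound is compatible with the observed maximum length $9$, and that no sporadic long obstruction escapes the argument, is where I would expect to spend the most effort.
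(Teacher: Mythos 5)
You have set out to prove a statement that the paper itself does not prove: it appears there as Conjecture~\ref{conj-PQS-basis} precisely because the authors state they ``have not been able to verify that it is finitely based,'' and the figure of $108$ basis elements of length at most $9$ rests only on computations by Michael Albert. Your proposal, by your own accounting, does not close this gap either. The parts you do carry out are correct: the restatement of Proposition~\ref{prop-with-queue-divided} (that $\pi$ is \textsf{PQS}-sortable if and only if some division $\pi_1|\cdots|\pi_t$ makes $\pi_1^{\rm r}\pi_2^{\rm r}\cdots\pi_t^{\rm r}$ avoid $231$) is a faithful reading of that proposition, and the transport between \textsf{PQS} and \textsf{SP} via the two-stack dual is legitimate, since the dual is a composition of reverse and inverse and hence a symmetry of the containment order, carrying bases to bases. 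But the entire content of the conjecture lives in the step you defer: an a priori bound $N$ on the length of minimal non-sortable permutations. Neither of your proposed routes is executed. The structural route would need an analogue of Proposition~\ref{prop-2431-3142-3241-structure} for the class cut out by $132$, $2|13$, $32|1$, $2|3|1$, which does not exist in the paper or elsewhere. The automaton/pumping route needs the claim that a bounded amount of state governs sortability as one scans $\pi$ --- and this is not a technicality to verify at the end, it is the open problem itself. Proposition~\ref{prop-div-inf-basis} is exactly the warning here: a class defined by finitely many forbidden divided patterns can have an infinite basis (it contains the antichain $U$), so no general principle converts the four-pattern characterization into finite state or a finite basis; any such argument must exploit these particular four patterns in an essential way, and you do not.

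So the verdict is: genuine gap, and it is the same gap the authors acknowledge. Without a proven bound $N$, the finite computation you describe (enumerate permutations up to length $N$, test with the decision procedure, collect minimal non-sortable ones) can reproduce the $108$ observed basis elements but cannot certify that no longer basis element exists; the conjecture would remain a conjecture. If you want to pursue this, the concrete open task is to prove bounded-state decidability of \textsf{PQS}-sortability under some encoding (insertion encoding or otherwise), or to find a structural decomposition of $\Av(132,\,2|13,\,32|1,\,2|3|1)$-divisible permutations strong enough to bound minimal obstructions --- either would be a publishable result, not a routine verification.
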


The enumeration of this class is
$$
1, 2, 6, 24, 120, 685, 4148, 25661, 159829, 997870, \dots,
$$
sequence \OEISlink{A214611} in the \OEISref.

\section{Avoiding Divided Permutations in General}\label{sec-pop-general}

As remarked in Section~\ref{sec-pop-without}, classes defined by finitely many divided permutations need not be finitely based.  Here we give an example.  Our basis will contain the infinite antichain of permutations referred to as $U$:
\begin{eqnarray*}
u_1&=&2,3,5,1,6,7,4\\
u_2&=&2,3,5,1,7,4,8,9,6\\
u_3&=&2,3,5,1,7,4,9,6,10,11,8\\
&\vdots&\\
u_k&=&2,3,5,1,7,4,9,6,11,8,\dots,2k+3,2k,2k+4,2k+5,2k+2.
\end{eqnarray*}
Each member of $U$ has precisely two copies of $2341$: its first four entries, and the first, second, third, and fifth entries from the right, and this observation can be used to prove that $U$ is indeed an antichain (see Atkinson, Murphy, and Ru\v{s}kuc~\cite{atkinson:partially-well-:} for such a proof).  We call the first four entries in $u_k$ the \emph{head}, the last five entries the \emph{tail}, and the entries between the \emph{midsection}.

\begin{proposition}\label{prop-div-inf-basis}
Every member of the infinite antichain $U$ is a basis element for the permutation class defined by avoiding the divided permutations
\begin{enumerate}
\item[(a)] $2341$.
\item[(b)] $234|1$, $23|4|1$, $2|34|1$, $2|3|4|1$,
\item[(c)] $314|2$, $31|42$, and $31|4|2$.
\end{enumerate}
\end{proposition}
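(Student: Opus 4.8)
The plan is to encode a division of a permutation as a weakly increasing labelling $B$ of its positions, where $B(i)$ names the block containing position $i$, and to reduce each avoidance condition to inequalities among the $B$-values at the positions of an occurrence. Unwinding the containment definition, conditions (a) and (b) together say that every occurrence $wxyz$ of $2341$ (positions $p_w<p_x<p_y<p_z$, values $z<w<x<y$) must satisfy $B(p_y)=B(p_z)$ and $B(p_w)<B(p_y)$; and condition (c) says that every occurrence $abcd$ of $3142$ (positions $p_a<p_b<p_c<p_d$, values $b<d<a<c$) must satisfy either $B(p_a)<B(p_b)$ or $B(p_a)=B(p_b)=B(p_c)=B(p_d)$. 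Hence a permutation lies in the class exactly when such a weakly increasing $B$ exists, and everything reduces to a feasibility question for $B$.

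The first real step is to list all occurrences of the two relevant patterns in $u_k$. By hypothesis the only copies of $2341$ are the head $H$ (positions $1,2,3,4$) and the tail $T$ (positions $2k+1,2k+3,2k+4,2k+5$). I would then check that the only copies of $3142$ are the consecutive quadruples $P_i$ on positions $2i+1,\dots,2i+4$ for $1\le i\le k-1$, together with two further copies $Q,Q'$ on positions $2k+1,2k+2,2k+3,2k+5$ and $2k+1,2k+2,2k+4,2k+5$. This is routine: the first entry $a$ of any $3142$ must be a descent top, which in $u_k$ forces it to be one of the peaks $5,7,\dots,2k+3$, and the consecutive-integer spacing of $u_k$ then determines the other three entries uniquely. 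Because these lists are finite and explicit, membership becomes a finite system of constraints on $B$.

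For non-membership (Part 1) I would run a forcing cascade. Condition $H$ gives $B(3)=B(4)$; since $P_1$ occupies $3,4,5,6$ with its first two positions already equal, its constraint collapses to the ``all equal'' alternative, forcing $B(3)=\dots=B(6)$. Feeding this into $P_2$ forces equality through position $8$, and inductively $P_1,\dots,P_{k-1}$ force $B$ constant on positions $3,\dots,2k+2$. Now $Q$, whose first two positions $2k+1,2k+2$ are already equal, forces $B$ constant through position $2k+5$; this contradicts the requirement $B(2k+1)<B(2k+4)$ coming from $T$. So no feasible $B$ exists and $u_k$ is not in the class. The degenerate permutation $u_1$, where $H$ and $T$ overlap, I would dispatch by the same cascade carried out by hand.

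For minimality (Part 2) I would produce a feasible $B$ after deleting any single entry, sorted by which patterns the deletion kills. Deleting a head entry destroys $H$; then the staircase labelling (a divider immediately after each surviving odd position among $3,5,\dots,2k+1$) satisfies every $P_i,Q,Q'$ through the ``$B(p_a)<B(p_b)$'' alternative and splits $T$ correctly, and this labelling is legal precisely because $H$ is gone. Deleting a tail-copy entry destroys $T$, after which the two-block labelling $[1,2\mid 3\,4\cdots]$ works, since every surviving $3142$ then sits inside a single block. Deleting the entry in position $2k+2$ kills $Q,Q'$ and $P_{k-1}$ at once, and a three-block labelling $[1,2\mid 3\cdots (2k+3)\mid (2k+4)(2k+5)]$ works. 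The crux is deleting a midsection entry: this destroys exactly the one or two $P_i$ that contain that position, severing the cascade, and I would use a hybrid labelling that is constant from position $3$ up to the deletion (handling $H$ and the left-hand $P_i$ by ``all equal'') and is a staircase from there onward (handling the right-hand $P_i$, $Q$, $Q'$ by ``escape'' while splitting $T$). The main obstacle is exactly this last case: one must verify that no surviving occurrence of $3142$ straddles the chosen boundary, and the explicit classification from the first step is what guarantees this, since the only occurrences with their first two entries left of the cut and their fourth entry right of it are precisely the $P_i$ removed by the deletion.
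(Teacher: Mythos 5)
Your proof is correct, and its skeleton matches the paper's: for non-membership you run the same forcing cascade (the head copy of $2341$ forces $5$ and $1$ into a common block, the $3142$ constraints propagate that block through the midsection into the tail, and the tail copy of $2341$ then gives a contradiction), and for minimality you perform the same case analysis according to whether the deleted entry lies in the head, the midsection, the tail copy of $2341$, or is the entry $2k$. Where you genuinely depart from the paper is in the bookkeeping. The paper works with divisions directly, invokes only the particular occurrences needed for the cascade, and in the minimality half exhibits divisions and leaves the verification as ``straightforward to check''; you instead encode a division as a weakly increasing labelling $B$, translate the eight forbidden divided patterns into two clean constraints (one per occurrence of $2341$, one per occurrence of $3142$ --- both translations are accurate), and, crucially, prove a classification lemma absent from the paper: the only $3142$ occurrences in $u_k$ are $P_1,\dots,P_{k-1}$, $Q$, and $Q'$. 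That lemma is what makes your midsection case airtight, since every $3142$ occurrence is either a consecutive block $P_i$ or lies entirely in the tail, so no surviving occurrence can straddle your hybrid cut; the paper's corresponding checks are ad hoc, while yours are mechanical. The price is that the classification must actually be proved, and your stated justification has a slip: the first entry of a $3142$ need not be a descent top in general, and in $u_k$ the descent tops also include $2k+5$, which is not on your list of peaks. The correct (equally routine) argument is that the first entry of a $3142$ needs at least two smaller entries to its right, which in $u_k$ happens only for the peaks $5,7,\dots,2k+3$; moreover for the last peak the copy is \emph{not} unique --- it yields both $Q$ and $Q'$ --- contrary to your ``determines the other three entries uniquely.'' With that one-line repair, and the observation that for $u_1$ your cases for head and tail-copy deletions already cover every entry (since the head and tail overlap when $k=1$), your argument is complete.
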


\begin{proof}
Consider any member of the antichain $U$.  In order to avoid $2341$, the head --- $2351$ --- must be divided.  However, to avoid $234|1$, $23|4|1$, $2|34|1$, and $2|3|4|1$, this division cannot occur between the $5$ and the $1$.  Therefore there is a block containing $51$.  Now consider the four entries starting with $51$, which in $u_k$ for $k\ge 2$ consist of $5174$.  As these entries are order isomorphic to $3142$ and the $51$ block is not divided, in order to avoid $314|2$, $31|42$, and $31|4|2$, they cannot be divided.  This propagates throughout the midsection of $u_k$, and at the end of the process, we see that then entries $5,1,7,4,9,6,11,8,\dots,2k+1,2k-2$ all lie in the same block.  Now consider the tail.  Because the entries $2k+1, 2k-2, 2k+3, 2k$ are order isomorphic to $3142$ and the $2k+1$ and $2k-2$ are not divided, there cannot be a division between then.  The same argument shows that the entries $2k+3, 2k, 2k+5, 2k+2$ lie in the same block.  However, this implies that this block contains the entire tail, which is order isomorphic to $2341$.

We must now argue that by removing any entry of such a permutation we obtain a permutation in the class, which follows  from a case analysis.  Typically, when an entry is removed one would relabel the remaining entries so that they consist of the numbers $1$ through $n-1$, but it is easier to make this argument without relabeling, so, for example, if the $1$ is removed, we talk about the permutation beginning with $235$.

Suppose an entry of the head is removed.  In this case, the remaining elements of the head do not form a copy of $2341$, so we are able to insert a division immediately preceding the $1$ (assuming the $1$ was not removed), or immediately following the $5$ (assuming the $5$ was not removed).  Because of this division, we can then insert divisions between every two entries in the midsection.  This then allows us to add a division in the tail between $2k+3$ and $2k$, and the resulting division avoids the divided permutations desired.  For example, if the $3$ is removed from $u_5$ we have the division
\[
2\ \cancel{3}\ 5\ |\ 1\ |\ 7\ |\ 4\ |\ 9\ |\ 6\ |\ 11\ |\ 8\ |\ 13\ |\ 10\ 14\ 15\ 12.
\]

Suppose now that an entry from the midsection is removed.  In this case the permutation splits into a direct sum of two shorter permutations, one consisting of the head and forward midsection, and the other consisting of the rear midsection and the tail.  We can then add a division in the first sum component the $2$ and the $3$.  In the second component, we add divisions between all entries in the rear midsection, and then between the $2k+3$ and $2k$ in the tail.  It is straight-forward to check that the resulting division avoids the desired divided permutations.  For example, if the $9$ is removed from $u_5$ we have the division
\[
2\ |\ 3\ 5\ 1\ \underline{7\ 4}\ \cancel{9}\ \overline{6\ |\ 11\ |\ 8}\ |\ 13\ |\ 10\ 14\ 15\ 12,
\]
where forward and rear midsections are denoted by under- and over-lining, respectively.

To complete the proof, suppose that an entry from the tail is removed.  If this entry is part of the copy of $2341$, then we can simply add a division between the $2$ and $3$ at the beginning of the permutation.  For example, if the $14$ is removed from $u_5$, we obtain
\[
2\ |\ 3\ 5\ 1\ 7\ 4\ 9\ 6\ 11\ 8\ 13\ 10\ \cancel{14}\ 15\ 12.
\]
Otherwise, the entry $2k$ was removed.  In this case, the $2k+3$ and $2k+4$ are not involved in a copy of $3142$, and so we can add a division between them, and also between the $2$ and the $3$.  For example, if the $10$ is removed from $u_5$, we obtain
\[
2\ |\ 3\ 5\ 1\ 7\ 4\ 9\ 6\ 11\ 8\ 13\ |\ \cancel{10}\ 14\ 15\ 12.
\]
This final case completes the proof.
\end{proof}

\section{Concluding Remarks}

Note that the pop stack can never contain a noninversion (when read from top to bottom) in the \textsf{PS} machine.  Thus \textsf{PS} sorting is a special case of sorting with a decreasing stack followed by an increasing stack, the \textsf{DI} machine, which has been studied by Smith~\cite{smith:a-decreasing-st:}.  However, there is no relation between \textsf{DI}-sortable and \textsf{PQS}-sortable permutations --- $3142$ can be sorted by \textsf{PQS} but not \textsf{DI}, while $465132$ can be sorted by \textsf{DI} but not \textsf{PQS}.

We have demonstrated with Proposition~\ref{prop-div-inf-basis} that even though the \textsf{PQS} sortable permutations can be characterized by finitely many divided patterns, this alone does not imply that this class has a finite basis.  However, there is another possible generalization of this problem.  Let us say that the permutation $\sigma$ can be obtained from $\pi$ by \emph{local reversals} if $\pi=\pi_1\pi_2\cdots\pi_t$ and $\sigma=\pi_1^{\rm r}\pi_2^{\rm r}\cdots\pi_t^{\rm r}$.  Thus the \textsf{PQS}-sortable permutations are those that can be obtained by local reversals from the $231$-avoiding permutations.

\begin{question}\label{question-local-reversals}
Let $\C$ be a permutation class and $\D$ the class of all permutations that can be obtained by local reversals from members of $\C$.  If $\C$ is finitely based, must $\D$ also be finitely based?
\end{question}

Our suspicion is that the answer to Question~\ref{question-local-reversals} is ``no''.  This suspicion is based on an example found by the Theory of Computing Research Group at the University of Otago~\cite{albert:compositions-of:}.  They considered a machine denoted $\textsf{T}$,called a \emph{transposition switch}.  Given a permutation $\pi$, $\textsf{T}$ returns the set of permutations which can be generated from $\pi$ by disjoint adjacent transpositions of entries of $\pi$.  This operation is then extended to sets (and thus, classes) of permutations in the natural way.  The Otago group showed that while $\textsf{T}^4(\Av(21))$ is finitely based, $\textsf{T}^k(\Av(21))$ is infinitely based for all $k\ge 5$.  Thus $\textsf{T}$ does not preserve finite bases.

\bigskip
\noindent{\bf Acknowledgments:} The analysis of the \textsf{PS} machine in Section~\ref{sec-pop-without} was aided by Michael Albert's \emph{PermLab} program~\cite{PermLab1.0}.  The authors would also like to thank Albert for computing the basis in Conjecture~\ref{conj-PQS-basis} and drawing our attention to \cite{albert:compositions-of:}, and Daniel Rose for his \LaTeX\ macros for drawing two stacks in series.

\bibliographystyle{acm}
\bibliography{../refs}

\def\cprime{$'$}
\begin{thebibliography}{10}

\bibitem{sloane:the-on-line-enc:}
The {O}n-line {E}ncyclopedia of {I}nteger {S}equences.
\newblock {P}ublished electronically at
  \href{http://oeis.org/}{http://oeis.org/}.

\bibitem{PermLab1.0}
{\sc Albert, M.~H.}
\newblock Perm{L}ab: Software for permutation patterns.
\newblock Available online at \url{http://www.cs.otago.ac.nz/PermLab}, 2012.

\bibitem{albert:sorting-classes:}
{\sc Albert, M.~H., Aldred, R. E.~L., Atkinson, M.~D., Handley, C.~C., Holton,
  D.~A., and McCaughan, D.~J.}
\newblock Sorting classes.
\newblock {\em Electron. J. Combin. 12\/} (2005), Research Paper 31, 25 pp.

\bibitem{albert:compositions-of:}
{\sc Albert, M.~H., Aldred, R. E.~L., Atkinson, M.~D., van Ditmarsch, H.~P.,
  Handley, C.~C., Holton, D.~A., and McCaughan, D.~J.}
\newblock Compositions of pattern restricted sets of permutations.
\newblock {\em Australas. J. Combin. 37\/} (2007), 43--56.

\bibitem{albert:permutations-ge:}
{\sc Albert, M.~H., Atkinson, M., and Linton, S.}
\newblock Permutations generated by stacks and deques.
\newblock {\em Ann. Comb. 14}, 1 (2010), 3--16.

\bibitem{albert:simple-permutat:}
{\sc Albert, M.~H., and Atkinson, M.~D.}
\newblock Simple permutations and pattern restricted permutations.
\newblock {\em Discrete Math. 300}, 1-3 (2005), 1--15.

\bibitem{albert:inflations-of-g:2x4}
{\sc Albert, M.~H., Atkinson, M.~D., and Vatter, V.}
\newblock Inflations of geometric grid classes of permutations: three case
  studies.
\newblock arXiv:1209.0425 [math.CO].

\bibitem{atkinson:partially-well-:}
{\sc Atkinson, M.~D., Murphy, M.~M., and Ru{\v{s}}kuc, N.}
\newblock Partially well-ordered closed sets of permutations.
\newblock {\em Order 19}, 2 (2002), 101--113.

\bibitem{atkinson:sorting-with-tw:}
{\sc Atkinson, M.~D., Murphy, M.~M., and Ru{\v{s}}kuc, N.}
\newblock Sorting with two ordered stacks in series.
\newblock {\em Theoret. Comput. Sci. 289}, 1 (2002), 205--223.

\bibitem{atkinson:restricted-perm:wreath}
{\sc Atkinson, M.~D., and Stitt, T.}
\newblock Restricted permutations and the wreath product.
\newblock {\em Discrete Math. 259}, 1-3 (2002), 19--36.

\bibitem{avis:on-pop-stacks-i:}
{\sc Avis, D., and Newborn, M.}
\newblock On pop-stacks in series.
\newblock {\em Utilitas Math. 19\/} (1981), 129--140.

\bibitem{bona:exact-enumerati:}
{\sc B{\'o}na, M.}
\newblock Exact enumeration of {$1342$}-avoiding permutations: a close link
  with labeled trees and planar maps.
\newblock {\em J. Combin. Theory Ser. A 80}, 2 (1997), 257--272.

\bibitem{brignall:simple-permutat:}
{\sc Brignall, R., Huczynska, S., and Vatter, V.}
\newblock Simple permutations and algebraic generating functions.
\newblock {\em J. Combin. Theory Ser. A 115}, 3 (2008), 423--441.

\bibitem{elder:permutations-ge:}
{\sc Elder, M.}
\newblock Permutations generated by a stack of depth {$2$} and an infinite
  stack in series.
\newblock {\em Electron. J. Combin. 13\/} (2006), Research paper 68, 12 pp.

\bibitem{even:queues-stacks-a}
{\sc Even, S., and Itai, A.}
\newblock Queues, stacks, and graphs.
\newblock In {\em Theory of machines and computations (Proc. Internat. Sympos.,
  Technion, Haifa, 1971)}. Academic Press, New York, 1971, pp.~71--86.

\bibitem{knuth:the-art-of-comp:1}
{\sc Knuth, D.~E.}
\newblock {\em The art of computer programming. {V}olume 1}.
\newblock Addison-Wesley Publishing Co., Reading, Mass., 1968.
\newblock Fundamental Algorithms.

\bibitem{knuth:the-art-of-comp:3}
{\sc Knuth, D.~E.}
\newblock {\em The art of computer programming. {V}olume 3}.
\newblock Addison-Wesley Publishing Co., Reading, Mass., 1973.
\newblock Sorting and searching.

\bibitem{murphy:restricted-perm:}
{\sc Murphy, M.~M.}
\newblock {\em Restricted Permutations, Antichains, Atomic Classes, and Stack
  Sorting}.
\newblock PhD thesis, Univ. of St Andrews, 2002.

\bibitem{pratt:computing-permu:}
{\sc Pratt, V.~R.}
\newblock Computing permutations with double-ended queues, parallel stacks and
  parallel queues.
\newblock In {\em STOC '73: Proceedings of the Fifth Annual ACM Symposium on
  the Theory of Computing\/} (New York, NY, USA, 1973), ACM Press,
  pp.~268--277.

\bibitem{smith:a-decreasing-st:}
{\sc Smith, R.}
\newblock A decreasing stack and an increasing stack in series.
\newblock arXiv:1301.6801 [math.CO].

\bibitem{tarjan:sorting-using-n:}
{\sc Tarjan, R.}
\newblock Sorting using networks of queues and stacks.
\newblock {\em J. Assoc. Comput. Mach. 19\/} (1972), 341--346.

\bibitem{west:sorting-twice-t:}
{\sc West, J.}
\newblock Sorting twice through a stack.
\newblock {\em Theoret. Comput. Sci. 117}, 1-2 (1993), 303--313.

\end{thebibliography}

\end{document}